\newtheorem{theo}{Theorem}[section]
\newtheorem{lem}[theo]{Lemma}
\newtheorem{pro}[theo]{Proposition}
\newtheorem{cor}[theo]{Corollary}
\newtheorem{prob}[theo]{Problem}
\newtheorem{fact}[theo]{Fact}
\theoremstyle{definition}
\newtheorem{defi}[theo]{Definition}
\newtheorem{rem}[theo]{Remark}
\numberwithin{equation}{section}
\newcommand{\retY}{\ensuremath{\mathcal{K}(Y)}}
\newcommand{\retX}{\ensuremath{\mathcal{K}(X^*)}}
\newcommand{\completionXY}{\ensuremath{\widehat{X}_Y}}
\newcommand{\pty}[1]{\ensuremath{(\mathcal{#1})}}
\newcommand{\ptyE}{\pty{E}}
\newcommand{\ptyEE}{\pty{E'}}
\newcommand{\ptyC}{\pty{C}}
\newcommand{\ptyLK}{\pty{LK}}
\newcommand{\w}{\omega}
\newcommand{\FF}{\mathcal{F}}
\newcommand{\N}{\mathbb{N}}
\def\({\left(}
\def\){\right)}
\title{Completeness in the Mackey topology by norming subspaces}
\author{A.J. Guirao}
\address{Instituto Universitario de Matem\'{a}tica Pura y Aplicada\\ Universitat Polit\`{e}cnica de Val\`{e}ncia\\ Camino de Vera s/n\\ 46022 Valencia\\ Spain}
\email{anguisa2@mat.upv.es}
\author{G. Mart\'{\i}nez-Cervantes}
\address{Dpto. de Matem\'{a}ticas, Facultad de Matem\'{a}ticas, Universidad de Murcia, 30100 Espinardo (Murcia), Spain}
\email{gonzalomartinezcervantes@gmail.com}
\author{J. Rodr\'{i}guez}
\address{Dpto. de Ingenier\'{i}a y Tecnolog\'{i}a de Computadores,
Facultad de Inform\'{a}tica, Universidad de Murcia, 30100 Espinardo (Murcia), Spain}
\email{joserr@um.es}
\subjclass[2010]{46A50, 46B26}
\keywords{Mackey topology; completeness; norming subspace; Mazur property}
\thanks{A.J. Guirao was supported by projects MTM2017-83262-C2-1-P (AEI/FEDER, UE) 
and 19368/PI/14 (Fundaci\'on S\'eneca). G. Mart\'{\i}nez-Cervantes and J. Rodr\'{i}guez
were supported by projects MTM2014-54182-P and
MTM2017-86182-P (AEI/FEDER, UE) and 19275/PI/14 (Fundaci\'on S\'eneca).}
\dedicatory{Dedicated to the memory of Bernardo Cascales}
\begin{document}
\begin{abstract}
We study the class of Banach spaces $X$ such that the locally convex space $(X,\mu(X,Y))$ is complete for every
norming and norm-closed subspace $Y \subset X^*$, where $\mu(X,Y)$ denotes the Mackey topology on $X$ associated to the dual pair $\langle X,Y\rangle$.
Such Banach spaces are called fully Mackey complete. We show that fully Mackey completeness is implied by Efremov's property~($\mathcal{E}$) 
and, on the other hand, it prevents the existence of subspaces isomorphic to~$\ell_1(\omega_1)$. 
This extends previous results by Guirao, Montesinos and Zizler [J. Math. Anal. Appl. 445 (2017), 944--952] and Bonet and Cascales [Bull. Aust. Math. Soc. 81 (2010), 409-413]. 
Further examples of Banach spaces which are not fully Mackey complete are exhibited, like $C[0,\omega_1]$ and the long James space $J(\omega_1)$. 
Finally, by assuming the Continuum Hypothesis, we construct a Banach space with $w^*$-sequential dual unit ball which is not fully Mackey complete. 
A key role in our discussion is played by the (at least formally) smaller class of Banach spaces $X$ such that $(Y,w^*)$ has the Mazur property
for every norming and norm-closed subspace $Y \subset X^*$.  
\end{abstract}

\maketitle

\section{Introduction}

Let $X$ be a Banach space and $Y\subset X^*$ a $w^*$-dense subspace (not necessarily norm-closed). 
The Mackey topology $\mu(X,Y)$ on~$X$ associated to the dual pair $\langle X, Y\rangle$ is the locally convex topology
of uniform convergence on elements of the family
\[
	\retY:=\{K\subset Y\colon \, K \text{ is absolutely convex and } w^*\text{-compact}\}.
\]
Several authors have recently discussed the {\em completeness} of $(X,\mu(X,Y))$,
see \cite{bon-cas}, \cite{gui-mon} and~\cite{gui-mon-ziz}. This research line was motivated initially by Kunze's paper~\cite{kun} on vector integration (cf. \cite{and-zie,rod18}). 
Bonet and Cascales~\cite{bon-cas} exploited some results of~\cite{sua-san1} to prove that if $X$ contains a subspace isomorphic to~$\ell_1(\mathfrak{c})$, then there is a norming and norm-closed subspace $Y \subset X^*$ for which $(X,\mu(X,Y))$ is {\em not} complete. At this point we stress
that, in general, the completeness of $(X,\mu(X,Y))$ implies that $Y$ is norming, see \cite[Proposition~3]{gui-mon-ziz}. Guirao, Montesinos and Zizler~\cite{gui-mon-ziz}
exhibited a connection between the completeness of $(X,\mu(X,Y))$ and the {\em Mazur property} of $(Y,w^*)$
(i.e. the property that every $w^*$-sequentially continuous linear functional $f: Y \to \mathbb{R}$ is $w^*$-continuous). More precisely:
	\begin{itemize}
		\item[(a)] If $(Y,w^*)$ has the Mazur property and $Y$ is norm-closed, then $(X,\mu(X,Y))$ is complete, see \cite[Proposition~10]{gui-mon-ziz}.
		\item[(b)] If $(X,\mu(X,Y))$ is complete and every $K\in\retY$ is Fr\'{e}chet-Urysohn, then $(Y,w^*)$ has the Mazur property, see \cite[Proposition~1]{gui-mon-ziz}.
	\end{itemize}
	
Let us introduce a couple of definitions:	
\begin{defi}
	A Banach space $X$ is said to be
	\emph{fully Mackey complete} (resp. \emph{fully Mazur}) if $(X,\mu(X,Y))$ is complete
	(resp. $(Y,w^*)$ has the Mazur property) for every norming and norm-closed subspace $Y\subset X^*$.
\end{defi}
Thus, statement~(a) above implies that every fully Mazur space is fully Mackey complete. A sufficient condition on a Banach space~$X$ to be fully Mazur
is that $(B_{X^*},w^*)$ is Fr\'{e}chet-Urysohn (see \cite[Theorem~5]{gui-mon-ziz}), which includes the case of weakly compactly generated
spaces and, more generally, weakly Lindelof determined ones. On the other hand, the aforementioned result of~\cite{bon-cas}
says that a fully Mackey complete Banach space cannot contain subspaces isomorphic to~$\ell_1(\mathfrak{c})$.

In this paper we go a bit further in studying fully Mazur and fully Mackey complete Banach spaces. The paper is organized as follows.

Section~\ref{section:preliminaries} introduces the basic terminology and contains some preliminary known results on the completeness of Mackey topologies. 
In addition, we prove that completeness and quasi-completeness are equivalent for $(X,\mu(X,Y))$ whenever $Y$ is 
norm-closed (Proposition~\ref{prop:q-comp_equals_comp}). 

We begin Section~\ref{section:2} by showing that in statement~(b) above it is enough to assume (besides the completeness of~$(X,\mu(X,Y))$) that 
every convex $w^*$-sequentially closed subset of any~$K\in \retY$ is $w^*$-closed
(Proposition~\ref{prop-Sc-CompleteImpliesMazur}). This is a localization of the Banach space {\em property~$\ptyEE$} studied in~\cite{avi-mar-rod,gon3} (which means that 
every convex $w^*$-sequentially closed bounded subset of the dual is $w^*$-closed). In particular, fully Mackey completeness is equivalent to being fully Mazur
for Banach spaces with property $\ptyEE$. We stress that property~$\ptyEE$ is strictly weaker than having Fr\'{e}chet-Urysohn dual ball, as
witnessed by the so-called Johnson-Lindentrauss spaces (see~\cite[Theorem~3.1]{gon3}).

Our main results in Section~\ref{section:2} characterize fully Mazur and fully Mackey complete Banach spaces.
Write $S_1(A) \subset X^*$ to denote the set of all limits of $w^*$-convergent 
sequences contained in the set~$A\subset X^*$. In Theorem~\ref{profullyMazurnorminglysequential} we prove that a Banach space~$X$ is fully Mazur if and only if $S_1(Y)=X^*$ 
for every norming and norm-closed subspace~$Y \subset X^*$. As a consequence, every Banach space having Efremov's property~$\ptyE$
is fully Mazur (Corollary~\ref{cor:Efremov}). Recall that~$X$ is said to have {\em Efremov's property~$\ptyE$} 
if $S_1(C)=\overline{C}^{w^*}$ for every convex bounded set $C\subset X^*$ (see~\cite{pli3}). The following implications hold in general:
\begin{center}
$(B_{X^*},w^*)$ is Fr\'{e}chet-Urysohn $\Longrightarrow$
$X$ has property~$\ptyE$ $\Longrightarrow$ $X$ has property~$\ptyEE$.
\end{center}
Under the Continuum Hypothesis there exist Banach spaces separating the three conditions above (see \cite{avi-mar-rod}), 
while it is unknown whether such examples exist in ZFC. On the other hand, in Theorem~\ref{theofullyMackeyLK}
we characterize fully Mackey completeness in a similar spirit, namely, a Banach space~$X$
is shown to be fully Mackey complete if and only if for every norming and norm-closed subspace $Y \subset X^\ast$ and every $x^\ast \in X^\ast\setminus Y$ there is
$K \in \retX$ such that $K \subset Y \oplus [x^\ast ]$ and $x^\ast \in \overline{K \cap Y}^{w^\ast}$.

Section~\ref{section:3} is mostly devoted to showing further examples of Banach spaces which are {\em not} fully Mackey complete.
Theorem~\ref{TheoGeneralizationNonfullyMackey} provides a technical tool which applies to prove that spaces like $\ell_1(\omega_1)$ and $C[0,\omega_1]$ 
fail to be fully Mackey complete. In particular, since this property is inherited by closed subspaces (Corollary~\ref{profullyMackeyhereditary}), it follows
that a fully Mackey complete Banach space cannot contain subspaces isomorphic to~$\ell_1(\omega_1)$, thus improving
the result of~\cite{bon-cas} which was mentioned above. The absence of subspaces isomorphic to~$\ell_1$ is not sufficient 
for fully Mackey completeness, as the example of $C[0,\omega_1]$ makes clear. On the other hand, we also investigate fully Mackey completeness
within the setting of {\em dual} Banach spaces. It is shown that if $X^*$ is fully Mackey complete, then $X$ is $w^*$-sequentially dense in~$X^{**}$
(Theorem~\ref{theofullyCorsondual}). As a consequence, we include a sharp characterization of the fully Mackey 
completeness of~$X^*$, in some particular cases, in terms of the compact topological space $(B_{X^{**}},w^*)$ 
(Corollaries~\ref{cor:DualOfSeparable} and~\ref{cor:DualOfAsplund}). 

One may wonder whether property $\ptyEE$ implies fully Mackey completeness. 
We will show that this is not the case. By modifying a construction of~\cite{avi-mar-rod}, under the Continuum Hypothesis,
we provide an example of a maximal almost disjoint family $\mathcal{F}$ of infinite subsets of~$\mathbb{N}$
for which the Banach space $C(K_\mathcal{F})$ is not fully Mackey complete, where
$K_{\mathcal{F}}$ is the Stone space of the Boolean algebra generated by $\mathcal{F}$ and the finite subsets of~$\N$ (Theorem~\ref{theo:EprimaNoFullyCompleto}). Note that, without any extra set-theoretic assumption, all Banach spaces of the form $C(K_\mathcal{F})$ have property~$\ptyEE$  (see~\cite{gon3}).
We finish the paper by collecting several open problems. For instance, we do not know whether fully Mazur and fully
Mackey completeness are equivalent properties.

\section{Terminology and preliminaries}\label{section:preliminaries}

All our topological spaces are Hausdorff and all our linear spaces are real. 
Given a linear space $E$, we denote by $E^{\#}$ the linear space
consisting of all linear functionals from~$E$ to~$\mathbb{R}$. For any set $S \subset E$, the symbol $[S]$ stands for the 
subspace of~$E$ generated by~$S$. Given a dual pair $\langle E,F \rangle$, we denote
by $w(E,F)$ and $w(F,E)$ the induced weak topologies on~$E$ and~$F$. When $E=X$ is a Banach
space and $F=X^*$ (its topological dual), we simply write $w=w(X,X^*)$ and $w^*=w(X^*,X)$. A 
locally convex space~$E$ is said to have the {\em Mazur property} if every
sequentially continuous element of~$E^{\#}$ is continuous. A topological space $T$
is said to be {\em Fr\'{e}chet-Urysohn} if, for each $B \subset T$, any element of $\overline{B}$ 
is the limit of a sequence contained in~$B$. A subset $C$ of a topological space~$T$ is said to be {\em sequentially closed}
if no sequence in~$C$ converges to a point in~$T\setminus C$. 
Given a Banach space~$X$, we write
$B_X=\{x\in X:\|x\|\leq 1\}$ (the closed unit ball of~$X$). A subspace $Y \subset X^*$ 
is said to be {\em norming} if the formula 
$$
	|||x|||=\sup\{x^*(x):\, x^*\in Y \cap B_{X^*}\}, \quad x\in X,
$$
defines an equivalent norm on~$X$. Given a compact topological space~$K$, we denote by $C(K)$
the Banach space of all real-valued continuous functions on~$K$, equipped with the supremum norm. For each $t\in K$,
we write $\delta_t\in C(K)^*$ to denote the evaluation functional at~$t$, i.e. $\delta_t(h):=h(t)$
for all $h\in C(K)$.
 
Throughout this paper $X$ is a Banach space. Given a $w^*$-dense subspace $Y\subset X^*$, we consider
the subspace of~$Y^{\#}$ defined by
\[
	\completionXY:=\{f\in Y^{\#}\colon \, f|_K\,\text{ is } w^*\text{-continuous for every } K\in\retY\}.
\]
Note that $X$ can be identified with the subspace of~$\completionXY$ consisting of all $w^*$-continuous elements of~$Y^{\#}$, that is,
for any $w^*$-continuous $f\in Y^{\#}$ there is a unique $x\in X$ such that $\langle x, y^* \rangle=f(y^*)$ for all $y^*\in Y$.
Observe that $\langle \completionXY,Y \rangle$ is a dual pair 	and that an absolutely convex set $K\subset Y$ is $w^*$-compact if and only if it is 
$w(Y,\completionXY)$-compact. 	In particular, the restriction of $\mu(\completionXY,Y)$ to~$X$ coincides with~$\mu(X,Y)$.
Grothendieck's characterization of the completion of a locally convex space (see e.g. \cite[\S 21.9]{kot}), when applied to our setting, yields the following:

\begin{fact}\label{fact:G}
Let $Y \subset X^*$ be a $w^*$-dense subspace.
\begin{enumerate}
\item[(i)] $(\completionXY,\mu(\completionXY,Y))$ is the completion of $(X,\mu(X,Y))$.
\item[(ii)] $(X,\mu(X,Y))$ is complete if and only if every element of~$\completionXY$ is \\ $w^*$-continuous.
\end{enumerate}
\end{fact}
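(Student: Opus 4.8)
The plan is to derive both statements directly from Grothendieck's description of the completion of a locally convex space (the reference given in the excerpt), after transcribing it into the language of the dual pair $\langle X,Y\rangle$. Recall that this description asserts: for a Hausdorff locally convex space $E$ with topological dual $E'$, the completion of $E$ is (topologically isomorphic to) the space of all $f\in (E')^{\#}$ whose restriction to every equicontinuous subset of $E'$ is $w(E',E)$-continuous, equipped with the topology of uniform convergence on the equicontinuous subsets of $E'$.

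First I would check that $(X,\mu(X,Y))$ is a Hausdorff locally convex space whose topological dual is exactly $Y$. The $w^*$-density of $Y$ in $X^*$ makes $\langle X,Y\rangle$ a dual pair: if $y^*(x)=0$ for every $y^*\in Y$, then the $w^*$-closed set $\{x^*\in X^*:x^*(x)=0\}$ contains the $w^*$-dense subspace $Y$, hence equals $X^*$, so $x=0$; consequently $\mu(X,Y)$ is Hausdorff. That its topological dual is precisely $Y$ is the Mackey--Arens theorem.

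The key step is then to identify the equicontinuous subsets of $Y$. By definition of $\mu(X,Y)$, the polars $K^{\circ}$ with $K\in\retY$ form a base of neighbourhoods of $0$; since each $K\in\retY$ is absolutely convex and, being $w^*$-compact, $w(Y,X)$-closed, the bipolar theorem for the pair $\langle X,Y\rangle$ gives $K^{\circ\circ}=K$. Hence a subset of $Y$ is equicontinuous if and only if it is contained in some member of $\retY$. Substituting this into Grothendieck's description, the continuity requirement becomes exactly ``$f|_K$ is $w^*$-continuous for every $K\in\retY$'' (recall that $w(Y,X)$ is the restriction of $w^*$ to $Y$), i.e. $f\in\completionXY$, while the topology of uniform convergence on the equicontinuous sets is precisely $\mu(\completionXY,Y)$. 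This proves~(i).

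Statement~(ii) is then immediate: by (i), $(\completionXY,\mu(\completionXY,Y))$ is the completion of $(X,\mu(X,Y))$, inside which $X$ sits as a dense subspace, carrying its own topology $\mu(X,Y)$, and is identified with the set of $w^*$-continuous elements of $Y^{\#}$. A Hausdorff locally convex space is complete precisely when the canonical embedding into its completion is onto, so $(X,\mu(X,Y))$ is complete if and only if $X=\completionXY$, that is, if and only if every element of $\completionXY$ is $w^*$-continuous. The only point demanding any care is the identification of the equicontinuous subsets of $Y$ with the subsets of the members of $\retY$; everything else is a routine transcription of the Mackey--Arens theorem and of Grothendieck's completion theorem, both of which I am taking as known.
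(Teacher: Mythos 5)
Your argument is correct and is exactly the route the paper intends: the paper derives this Fact by citing Grothendieck's completion theorem for $(X,\mu(X,Y))$, and your proposal simply fills in the same application, checking via Mackey--Arens that the dual is $Y$ and via the bipolar theorem that the equicontinuous subsets of $Y$ are precisely the subsets of members of $\retY$. No gaps; the identification of the completion topology with $\mu(\completionXY,Y)$ rests on the observation (made in the paper just before the Fact) that absolutely convex $w^*$-compact subsets of $Y$ coincide with the $w(Y,\completionXY)$-compact ones, which your argument implicitly uses and which is easily verified.
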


The following result is extracted from the proof of \cite[Proposition~10]{gui-mon-ziz}:

\begin{fact}\label{lem_CompImplyCont}
Let $Y \subset X^*$ be a $w^*$-dense and norm-closed subspace. Then 
$$
	\completionXY \subset \{f\in Y^{\#}: \, f \mbox{ is }w^*\mbox{-sequentially continuous}\} \subset Y^*.
$$
\end{fact}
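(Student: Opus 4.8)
The plan is to prove the two inclusions separately, the second one being immediate. If $f\in Y^{\#}$ is $w^*$-sequentially continuous, then it is also sequentially continuous for the norm topology of~$Y$, since norm-convergent sequences in~$Y$ are $w^*$-convergent; being linear, $f$ is then norm-continuous, i.e.\ $f\in Y^*$. This already gives the inclusion $\{f\in Y^{\#}: f \text{ is } w^*\text{-sequentially continuous}\}\subset Y^*$, and I do not anticipate any difficulty there.

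For the first inclusion I would fix $f\in\completionXY$ together with a sequence $(y_n^*)$ in~$Y$ that $w^*$-converges to some $y^*\in Y$, and aim to show $f(y_n^*)\to f(y^*)$. By the uniform boundedness principle $(y_n^*)$ is norm-bounded, hence $z_n^*:=y_n^*-y^*$ is a norm-bounded, $w^*$-null sequence in~$Y$. I would then consider the linear map $\psi\colon \ell_1(\{0,1,2,\dots\})\to X^*$ given by $\psi(a)=a_0 y^*+\sum_{n\geq 1}a_n z_n^*$ (the series converges in norm because $\sup_n\|z_n^*\|<\infty$), which takes values in~$Y$ precisely because $Y$ is norm-closed. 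The crucial observation is that $\psi$ is continuous from the $w(\ell_1,c_0)$-compact set $2B_{\ell_1}$ into $(X^*,w^*)$: for each $x\in X$ one has $\langle\psi(a),x\rangle=\sum_{n\geq 0}a_n d_n$ with $d_0=\langle y^*,x\rangle$ and $d_n=\langle z_n^*,x\rangle$ for $n\geq 1$, and the sequence $(d_n)_{n\geq 0}$ belongs to~$c_0$ exactly because $z_n^*\to 0$ in~$w^*$. Consequently $K:=\psi(2B_{\ell_1})$ is an absolutely convex $w^*$-compact subset of~$Y$, that is, $K\in\retY$, and it contains $y^*=\psi(e_0)$ as well as every $y_n^*=\psi(e_0+e_n)$. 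Since $f\in\completionXY$, the restriction $f|_K$ is $w^*$-continuous, and as $y_n^*\to y^*$ inside~$K$ this forces $f(y_n^*)\to f(y^*)$, proving that $f$ is $w^*$-sequentially continuous.

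I expect the only non-routine step to be the construction of a single member of~$\retY$ that captures a prescribed $w^*$-convergent sequence of~$Y$ together with its limit: the naive candidate, the $w^*$-closed absolutely convex hull of the sequence computed in~$X^*$, need not be contained in~$Y$, so one cannot simply invoke Banach--Alaoglu inside~$X^*$. Passing to the $w^*$-null shift $(z_n^*)$ and realizing the relevant hull as $\psi(2B_{\ell_1})$, with $\psi$ being $w^*$-continuous exactly because the coordinate sequences land in~$c_0$, is what simultaneously delivers $w^*$-compactness and, via the norm-closedness of~$Y$, the inclusion $K\subset Y$. (This argument is the one implicitly contained in the proof of \cite[Proposition~10]{gui-mon-ziz}.)
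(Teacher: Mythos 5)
Your proof is correct, and it is essentially the argument the paper relies on: the paper states this Fact without proof, citing the proof of Proposition~10 of Guirao--Montesinos--Zizler, whose key step is exactly your construction of a $w^*$-compact absolutely convex set $K\subset Y$ (the image of a ball of $\ell_1$ under the $w(\ell_1,c_0)$-to-$w^*$ continuous map built from a bounded $w^*$-null sequence, with norm-closedness of $Y$ guaranteeing $K\subset Y$). Both inclusions are handled correctly, so there is nothing to add.
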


Thus, under the additional assumption that $Y$ is norm-closed, the Hahn-Banach theorem
guarantees that for every $f\in \completionXY$ there is some $x^{**}\in X^{**}$ such that $x^{**}|_Y=f$.
Since $\{x^{**}\in X^{**}: \, x^{**}|_Y \mbox{ is }w^*\mbox{-continuous}\}=X\oplus Y^\perp$, we get:

\begin{fact}\label{fact:XplusYperp}
	Let $Y \subset X^*$ be a $w^*$-dense and norm-closed subspace. Then
	$(X,\mu(X,Y))$ is complete if and only if 
	$$
		\{x^{**}\in X^{**}: \, x^{**}|_K \mbox{ is }w^*\mbox{-continuous for every }K\in \retY\}=X\oplus Y^\perp.
	$$
\end{fact}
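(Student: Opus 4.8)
The plan is to read off this equivalence from Fact~\ref{fact:G}(ii) and Fact~\ref{lem_CompImplyCont}, together with the displayed identity $\{x^{**}\in X^{**}: x^{**}|_Y \text{ is } w^*\text{-continuous}\}=X\oplus Y^\perp$ recalled just above. Write $W$ for the set on the left-hand side of the claimed equality and let $r\colon X^{**}\to Y^{\#}$ be the restriction map $x^{**}\mapsto x^{**}|_Y$, whose kernel is exactly $Y^\perp$ (the annihilator of~$Y$ inside~$X^{**}$). Since $Y$ is norming it separates the points of~$X$, so $X\cap Y^\perp=\{0\}$ and $X\oplus Y^\perp$ is a genuine internal direct sum; moreover $X\oplus Y^\perp\subseteq W$ is immediate, as elements of~$X$ are $w^*$-continuous on all of~$X^*$ and elements of~$Y^\perp$ vanish on every $K\in\retY$.

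Next I would check that $r$ maps $W$ exactly onto~$\completionXY$. The inclusion $r(W)\subseteq\completionXY$ is just unravelling definitions: for $x^{**}\in W$ and $K\in\retY$ the function $(x^{**}|_Y)|_K$ coincides with $x^{**}|_K$, which is $w^*$-continuous by hypothesis, so $x^{**}|_Y\in\completionXY$. For the reverse inclusion, take $f\in\completionXY$; Fact~\ref{lem_CompImplyCont} gives $f\in Y^*$, so the Hahn--Banach theorem produces $x^{**}\in X^{**}$ with $x^{**}|_Y=f$, and then $x^{**}\in W$ because $x^{**}|_K=f|_K$ is $w^*$-continuous for every $K\in\retY$. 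Hence $r(W)=\completionXY$. On the other hand $r(X\oplus Y^\perp)=r(X)$, which is precisely the set of $w^*$-continuous elements of~$Y^{\#}$, i.e. the canonical copy of~$X$ inside~$\completionXY$.

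Finally I would assemble the equivalence. By Fact~\ref{fact:G}(ii), $(X,\mu(X,Y))$ is complete if and only if every element of~$\completionXY$ is $w^*$-continuous, that is, if and only if $\completionXY=r(X)$, equivalently $r(W)=r(X\oplus Y^\perp)$. Since $\ker r=Y^\perp\subseteq X\oplus Y^\perp\subseteq W$, the equality $r(W)=r(X\oplus Y^\perp)$ forces $W\subseteq (X\oplus Y^\perp)+\ker r=X\oplus Y^\perp$, hence $W=X\oplus Y^\perp$; the converse implication is trivial. This gives the statement. The argument is essentially bookkeeping and I do not expect a real obstacle: the only point needing care is to keep straight which weak-star topology is in force at each stage and to use the norm-closedness of~$Y$ in exactly one place, namely through Fact~\ref{lem_CompImplyCont}, which is what legitimises the Hahn--Banach extension step.
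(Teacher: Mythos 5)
Your argument is correct and is essentially the paper's own proof, which likewise reads the statement off from Fact~\ref{fact:G}(ii) and Fact~\ref{lem_CompImplyCont} via a Hahn--Banach extension of each $f\in\completionXY$ and the identity $\{x^{**}\in X^{**}:\, x^{**}|_Y \mbox{ is } w^*\mbox{-continuous}\}=X\oplus Y^\perp$. The only cosmetic slip is that you justify $X\cap Y^\perp=\{0\}$ by calling $Y$ norming, whereas the hypothesis only gives $w^*$-density --- which already suffices, since a $w^*$-dense subspace of~$X^*$ separates the points of~$X$.
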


The following useful fact (see \cite[Lemma~11]{gui-mon-ziz}) will be needed several times.

\begin{fact}\label{fact:11}
Let $Y \subset X^*$ be a norming subspace. If $x^{**}\in X^{**}\setminus (X\oplus Y^\perp)$, then
$Y \cap \ker(x^{**})$ is norming as well.
\end{fact}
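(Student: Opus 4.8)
The plan is to argue by contradiction, exploiting that $Z:=Y\cap\ker(x^{**})$ is a hyperplane of $Y$ on which $x^{**}$ is automatically bounded. First I would dispose of a degenerate case: if $x^{**}|_Y=0$ then $x^{**}\in Y^\perp\subset X\oplus Y^\perp$, contrary to hypothesis, so $x^{**}|_Y\neq 0$ and we may fix $y_0^*\in Y$ with $x^{**}(y_0^*)=1$. Then $Y=Z\oplus[y_0^*]$ and every $y^*\in Y$ decomposes as $y^*=z^*+x^{**}(y^*)\,y_0^*$ with $z^*\in Z$; note $|x^{**}(y^*)|\leq\|x^{**}\|\,\|y^*\|$, so this decomposition is norm-controlled on $B_{X^*}$.

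The first genuine step is to show that $Z$ is \emph{total} over $X$, i.e.\ ${}^{\perp}Z=\{0\}$; this is precisely where the hypothesis $x^{**}\notin X\oplus Y^\perp$ is used. Indeed, if some $x_0\in X\setminus\{0\}$ annihilated $Z$, then $Z\subset Y\cap\{x_0\}^\perp$; since $Y$ is $w^*$-dense (being norming) we have $x_0\notin {}^{\perp}Y$, so $Y\cap\{x_0\}^\perp$ is a hyperplane of $Y$, which forces $Z=Y\cap\{x_0\}^\perp=\ker(\widehat{x_0}|_Y)$, where $\widehat{x_0}$ denotes evaluation at $x_0$. Two linear functionals on $Y$ with the same hyperplane kernel are proportional, so $x^{**}|_Y=\lambda\,\widehat{x_0}|_Y$ for some scalar $\lambda$, i.e.\ $x^{**}-\lambda x_0\in Y^\perp$ and hence $x^{**}\in X\oplus Y^\perp$, a contradiction. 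In particular, $x\mapsto\sup\{|z^*(x)|:z^*\in Z\cap B_{X^*}\}$ is a genuine norm on $X$, and $Z$ is norming exactly when this norm is equivalent to $\|\cdot\|$.

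So assume $Z$ is not norming: there are $x_n\in X$ with $\|x_n\|=1$ and $s_n:=\sup\{|z^*(x_n)|:z^*\in Z\cap B_{X^*}\}\to 0$. Let $a>0$ be a norming constant for $Y$ and pick $y_n^*\in Y\cap B_{X^*}$ with $y_n^*(x_n)\geq a/2$; decomposing $y_n^*=z_n^*+t_ny_0^*$ with $t_n=x^{**}(y_n^*)$ gives $|t_n|\leq\|x^{**}\|$ and $\|z_n^*\|\leq M:=1+\|x^{**}\|\,\|y_0^*\|$. The key estimate is that, for an arbitrary $y^*\in Y\cap B_{X^*}$ decomposed the same way,
$$
y^*(x_n-x_m)\leq M(s_n+s_m)+\|x^{**}\|\,|y_0^*(x_n-x_m)|,
$$
so taking the supremum over $y^*\in Y\cap B_{X^*}$ and using that $Y$ is norming yields $a\,\|x_n-x_m\|\leq M(s_n+s_m)+\|x^{**}\|\,|y_0^*(x_n-x_m)|$. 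Since $(y_0^*(x_n))_n$ is a bounded real sequence it has a Cauchy subsequence, along which the right-hand side tends to $0$; hence $(x_n)$ has a subsequence converging to some $x\in X$ with $\|x\|=1$. But then $|z^*(x)|=\lim_n|z^*(x_n)|\leq\lim_n s_n=0$ for every $z^*\in Z\cap B_{X^*}$, so $x\in{}^{\perp}Z=\{0\}$, contradicting $\|x\|=1$. Therefore $Z$ is norming.

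I expect the main obstacle to be realizing that the naive approach fails: given a single unit vector $x$, one cannot merely take a norming functional $y^*$ for $x$ and project it into $Z$, because the correction term $x^{**}(y^*)\,y_0^*(x)$ need not be small. Passing to the differences $x_n-x_m$ and extracting a convergent subsequence of the scalars $y_0^*(x_n)$ is exactly what neutralizes this term, while the boundedness of $x^{**}$ on $B_{X^*}$ is what keeps the decomposition coefficients under control throughout.
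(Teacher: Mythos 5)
Your proof is correct. The hyperplane argument correctly extracts from $x^{**}\notin X\oplus Y^{\perp}$ that $Z:=Y\cap\ker(x^{**})$ is total over $X$; the decomposition $y^{*}=z^{*}+x^{**}(y^{*})\,y_0^{*}$ is indeed norm-controlled on $B_{X^{*}}$ with constant $M=1+\|x^{**}\|\,\|y_0^{*}\|$; and the estimate $a\|x_n-x_m\|\leq M(s_n+s_m)+\|x^{**}\|\,|y_0^{*}(x_n-x_m)|$, combined with passing to a subsequence along which $y_0^{*}(x_n)$ converges, legitimately produces a unit vector annihilated by $Z$, contradicting totality. (A cosmetic point: the functionals $y_n^{*}$ with $y_n^{*}(x_n)\geq a/2$ are chosen but never used and can be deleted.)

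Note that the paper gives no proof of this Fact at all; it is quoted from \cite[Lemma~11]{gui-mon-ziz}, so your argument can only be compared with the standard direct proof of that lemma, which is quantitative rather than by contradiction. There one observes that, $Y$ being $a$-norming, the projection of $X\oplus Y^{\perp}$ onto $X$ is bounded, hence $X\oplus Y^{\perp}$ is norm-closed and $d:=\mathrm{dist}(x^{**},X\oplus Y^{\perp})>0$; then, for a unit vector $x$, one takes $y^{*}\in Y\cap B_{X^{*}}$ nearly norming $x$ and corrects it inside $Y\cap\{x\}^{\perp}$: a Hahn--Banach computation gives $\|x^{**}|_{Y\cap\{x\}^{\perp}}\|\geq \mathrm{dist}(x^{**},[x]+Y^{\perp})\geq d$, so there is $u^{*}\in Y\cap\{x\}^{\perp}$ with $x^{**}(u^{*})=x^{**}(y^{*})$ and $\|u^{*}\|$ of order $\|x^{**}\|/d$, and $y^{*}-u^{*}\in Z$ still norms $x$. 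That route yields an explicit norming constant for $Z$ (roughly $ad/(d+\|x^{**}\|)$), which is sometimes useful. Your route buys elementarity instead: you never need the closedness of $X\oplus Y^{\perp}$ nor any distance estimate, only the algebraic hypothesis $x^{**}\notin X\oplus Y^{\perp}$ (through the totality of $Z$), at the price of a non-constructive compactness/contradiction argument that gives no constant. Both are complete proofs.
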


A general locally convex space very often lacks completeness but sometimes it satisfies a weaker property, called quasi-completeness, that is enough for major applications of completeness (Krein-Smulyan theorem, for instance). Recall that a locally convex space~$E$ is said to be \emph{quasi-complete} if every bounded and closed subset of~$E$ is complete. 
We next show that in our setting quasi-completeness and completeness coincide.

\begin{pro} \label{prop:q-comp_equals_comp}
	Let $Y\subset X^*$ be a $w^*$-dense and norm-closed subspace. Then $(X,\mu(X,Y))$ is quasi-complete if and only if it is complete.
\end{pro}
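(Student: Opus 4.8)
The plan is to exploit Fact~\ref{fact:XplusYperp} together with Grothendieck's description in Fact~\ref{fact:G}. One implication is trivial, since every complete locally convex space is quasi-complete. For the converse, I would argue by contraposition: assume $(X,\mu(X,Y))$ is not complete and produce a bounded $\mu(X,Y)$-closed subset that fails to be complete. By Fact~\ref{fact:G}(ii) there is $f \in \completionXY$ which is not $w^*$-continuous, and by Fact~\ref{lem_CompImplyCont} (using that $Y$ is norm-closed) there is $x^{**} \in X^{**}$ with $x^{**}|_Y = f$; moreover $x^{**} \notin X \oplus Y^\perp$, since otherwise $f$ would be $w^*$-continuous. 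Thus $f \in \completionXY \setminus X$, where we identify $X$ with the $w^*$-continuous elements of $Y^\#$.

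The key step is to produce the offending bounded closed set inside $X$ itself, not merely inside the completion. I would consider the ball $B := n B_X$ for a suitable radius $n$, or more precisely work with the trace on $X$ of a ball in $\completionXY$ centered at $f$. Concretely, since $f \in \completionXY = $ the completion of $(X,\mu(X,Y))$, there is a net $(x_\alpha)$ in $X$ that is $\mu(X,Y)$-Cauchy and $\mu(\completionXY,Y)$-converges to $f$. Because $Y$ is norm-closed and norming, $\mu(X,Y)$ is finer than $w(X,Y)$, which is coarser than the norm topology but still compatible enough that $\mu(X,Y)$-Cauchy nets are norm-bounded: indeed $\mu(X,Y)$-convergent-hence-Cauchy nets are in particular $w(X,Y)=w(X,\completionXY)|_X$-bounded, and since $Y$ is norming this forces norm-boundedness via the uniform boundedness principle (the norm $|||\cdot|||$ is equivalent to $\|\cdot\|$). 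So $(x_\alpha) \subset n B_X$ for some $n$. Now $C := n B_X \cap \{x \in X : |||x - x_0||| \le r\}$-type sets won't quite capture it; instead the clean choice is simply $C := \overline{\{x_\alpha\}}^{\,\mu(X,Y)}$, a bounded, $\mu(X,Y)$-closed subset of $X$. The net $(x_\alpha)$ is a $\mu(X,Y)$-Cauchy net in $C$ with no $\mu(X,Y)$-limit in $C$ (its only limit in the completion is $f \notin X \supset C$), so $C$ is not complete, whence $(X,\mu(X,Y))$ is not quasi-complete.

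The main obstacle I anticipate is the boundedness claim: one must check carefully that a $\mu(X,Y)$-Cauchy net converging to $f$ in the completion is norm-bounded, so that it lives in a genuine bounded set. This is where norm-closedness of $Y$ enters essentially — it guarantees $Y$ is a Banach space and that $\completionXY \subset Y^*$ (Fact~\ref{lem_CompImplyCont}), so $f$ is a bounded functional on $Y$; then the net $(x_\alpha)$, viewed in $Y^*$, converges to $f$ pointwise on $Y$ and is pointwise bounded, hence norm-bounded in $Y^*$ by the uniform boundedness principle, and the norm of $Y^*$ restricted to $X$ is (equivalent to) the original norm because $Y$ is norming. Once boundedness is secured, the rest is a routine application of the definitions of completeness and quasi-completeness; a minor technical point is to ensure the closure $C$ is taken in $X$ (not the completion) and to confirm $(x_\alpha)$ remains Cauchy in $C$, which is immediate.
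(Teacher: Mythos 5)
Your overall strategy (reduce to Facts~\ref{fact:G} and~\ref{lem_CompImplyCont}, exhibit a bounded closed set containing a non-convergent Cauchy net) is the right shape, but the step you yourself flag as the main obstacle is where the argument genuinely breaks. You take an \emph{arbitrary} net $(x_\alpha)$ in $X$ that $\mu(\completionXY,Y)$-converges to $f$ and try to prove it is norm-bounded by the uniform boundedness principle, arguing that the net converges pointwise on $Y$ and is therefore pointwise bounded. For nets this is false: unlike sequences, a convergent (hence Cauchy) net need not be bounded in any sense, and pointwise convergence of a net of functionals does not give pointwise boundedness of the family $\{x_\alpha\}$ (only of tails depending on the point $y^*$, which is not enough to invoke the uniform boundedness principle). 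Concretely, in any infinite-dimensional setting one can have $\mu(X,Y)$-convergent nets that are norm-unbounded, so the set $C:=\overline{\{x_\alpha\}}^{\,\mu(X,Y)}$ you build need not be bounded, and quasi-completeness gives you nothing about it.

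The paper closes exactly this gap by not taking an arbitrary approximating net but by \emph{manufacturing one inside $B_X$}: since $f\in Y^*$ (Fact~\ref{lem_CompImplyCont}), one may normalize so that $f\in B_{Y^*}$; then the bipolar theorem in the dual pair $\langle \completionXY, Y\rangle$ together with Mazur's theorem shows
\[
\overline{B_X}^{\,\mu(\completionXY,Y)}=\overline{B_X}^{\,w(\completionXY,Y)}=\{g\in\completionXY:\ g(y^*)\le 1 \text{ for all } y^*\in B_Y\}\ni f,
\]
so there is a net contained in the bounded set $B_X$ which is $\mu$-Cauchy and converges to $f$ in the completion; quasi-completeness then forces $f\in X$, and Fact~\ref{fact:G} concludes. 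If you replace your boundedness claim by this bipolar/Mazur argument (or some equivalent device that places the approximating net in a fixed bounded set), your contrapositive formulation goes through; as written, the proof is not correct.
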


\begin{proof}
	Take $A:=B_X$ and apply the bipolar theorem (see e.g. \cite[Theorem~3.38]{fab-ultimo}) 
	in the dual pair $\langle \completionXY, Y\rangle$ to obtain
	\[
		\overline{A}^{w(\completionXY,Y)}=A^{\circ\circ}=\{f\in\completionXY: \, f(y^*)\leq 1 \mbox{ for all } y^*\in B_Y\}.
	\]
	Bearing in mind Mazur's theorem (see e.g. \cite[Theorem~3.45]{fab-ultimo}), we deduce that
	\begin{equation}\label{eqn:Mazur}
		\overline{A}^{\mu(\completionXY,Y)}=\overline{A}^{w(\completionXY,Y)}=\{f\in\completionXY: \, f(y^*)\leq 1 \mbox{ for all } y^*\in B_Y\}.
	\end{equation}
	
	Suppose $(X,\mu(X,Y))$ is quasi-complete. We will show that $(X,\mu(X,Y))$ is complete by applying Fact~\ref{fact:G}. 
	Take any $f \in \completionXY$. Since $f \in Y^*$ (by Fact~\ref{lem_CompImplyCont}), 
	we can assume that $f\in B_{Y^*}$ (normalize!). Then $f\in \overline{A}^{\mu(\completionXY,Y)}$ (by~\eqref{eqn:Mazur})
	and so there is a net $\left(x_\alpha\right)_{\alpha\in \Lambda}$ in~$A$ which $\mu(\completionXY,Y)$-converges to~$f$. In particular, $\left(x_\alpha\right)_{\alpha\in \Lambda}$ 
	is a bounded Cauchy net in the quasi-complete locally convex space $(X,\mu(X,Y))$. Then 
	$\left(x_\alpha\right)_{\alpha\in \Lambda}$ is $\mu(X,Y)$-convergent to some $x\in X$  and so $f=x\in X$.
\end{proof}

\section{Mazur property and Mackey completeness}\label{section:2}

The following proposition improves statement~(b) in the introduction:

\begin{pro}\label{prop-Sc-CompleteImpliesMazur}
	Let $Y\subset X^*$ be a $w^*$-dense subspace such that: 
	\begin{enumerate}
	\item[(i)] $(X,\mu(X,Y))$ is complete, 
	\item[(ii)] every $K\in\retY$ has the following property: every convex $w^*$-sequentially closed subset of~$K$ is $w^*$-closed. 
	\end{enumerate}
	Then $(Y,w^*)$ has the Mazur property.
\end{pro}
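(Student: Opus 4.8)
The plan is to show directly that any $w^*$-sequentially continuous $f\in Y^{\#}$ belongs to $\completionXY$, and then let completeness do the rest via Fact~\ref{fact:G}(ii). So fix such an $f$ and fix an arbitrary $K\in\retY$; the goal is to prove that $f|_K$ is $w^*$-continuous.

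The step that does the work is converting sequential continuity into genuine continuity using the convexity of~$K$ together with hypothesis~(ii). For each $c\in\mathbb{R}$, the set $\{y^*\in K:\ f(y^*)\le c\}$ is convex (since $K$ is absolutely convex and $f$ is linear), and it is $w^*$-sequentially closed in~$K$: if a sequence lying in it is $w^*$-convergent to some $y^*\in K$, then $w^*$-sequential continuity of~$f$ forces $f(y^*)\le c$. By~(ii) this set is therefore $w^*$-closed, which says exactly that $f|_K$ is $w^*$-upper semicontinuous. Running the same argument with $\{y^*\in K:\ f(y^*)\ge c\}$ shows $f|_K$ is also $w^*$-lower semicontinuous, hence $w^*$-continuous on~$K$.

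Since $K\in\retY$ was arbitrary, $f\in\completionXY$. By hypothesis~(i) and Fact~\ref{fact:G}(ii), $f$ is $w^*$-continuous, i.e. $f\in X$. As the topological dual of $(Y,w^*)=(Y,w(Y,X))$ is precisely~$X$, we have shown that every $w^*$-sequentially continuous element of~$Y^{\#}$ is $w^*$-continuous, that is, $(Y,w^*)$ has the Mazur property.

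I do not expect a genuine obstacle here: the whole point is that assumption~(ii) is exactly the localization of property~$\ptyEE$ needed to run the two-sided semicontinuity argument, replacing the Fréchet--Urysohn hypothesis on the members of~$\retY$ used in statement~(b) of the introduction (where one instead uses directly that sequential continuity implies continuity on a Fréchet--Urysohn space). The only minor care required is to make sure the relevant sublevel and superlevel sets are regarded as convex subsets of a \emph{single} $K\in\retY$, which is immediate.
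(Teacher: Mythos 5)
Your proposal is correct and follows essentially the same route as the paper: reduce, via completeness and Fact~\ref{fact:G}, to showing $f|_K$ is $w^*$-continuous for each $K\in\retY$, and obtain this by noting that the relevant convex level sets (the paper uses $f^{-1}(C)\cap K$ for convex closed $C\subset\mathbb{R}$, you use sublevel and superlevel sets, which amounts to the same semicontinuity argument) are $w^*$-sequentially closed by sequential continuity and hence $w^*$-closed by hypothesis~(ii).
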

\begin{proof}
	Let $f: Y \to \mathbb{R}$ be linear and $w^*$-sequentially continuous. Since 
	$(X,\mu(X,Y))$ is complete, in order to prove that $f$ is $w^*$-continuous it suffices to check that $f|_K$ is $w^*$-continuous
	for any $K\in \retY$ (Fact~\ref{fact:G}). Clearly, for a given $K\in \retY$, the $w^*$-continuity of~$f|_K$ is equivalent to 
	\begin{enumerate}
	\item[($\star$)] $f^{-1}(C)\cap K$ is $w^*$-closed for every {\em convex} closed set $C\subset \mathbb{R}$.
	\end{enumerate}   
	Since $f$ is linear, for any convex $C \subset \mathbb{R}$ the set $f^{-1}(C)\cap K$ is convex and so 
	it is $w^*$-closed if and only if it is $w^*$-sequentially closed (by~(ii)). Therefore, the $w^*$-sequential continuity of~$f$
	ensures that ($\star$) holds and the proof is finished.
\end{proof}

\begin{cor}\label{cor:EE}
Suppose $X$ has property~$\ptyEE$. 
\begin{enumerate}
\item[(i)] Let $Y \subset X^*$ be a $w^*$-dense and norm-closed subspace. Then $(X,\mu(X,Y))$ is complete if and only if $(Y,w^*)$ has the Mazur property.
\item[(ii)] $X$ is fully Mackey complete if and only if it is fully Mazur.
\end{enumerate}
\end{cor}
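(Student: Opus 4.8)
The plan is to deduce Corollary~\ref{cor:EE} directly from the results already established, with essentially no new work. Part~(i) follows by combining Proposition~\ref{prop-Sc-CompleteImpliesMazur} with statement~(a) from the introduction (i.e. \cite[Proposition~10]{gui-mon-ziz}). For the nontrivial direction, assume $(X,\mu(X,Y))$ is complete; I would check that hypothesis~(ii) of Proposition~\ref{prop-Sc-CompleteImpliesMazur} is automatically satisfied because $X$ has property~$\ptyEE$: given $K\in\retY$, any convex $w^*$-sequentially closed subset $D\subset K$ is convex, bounded (being contained in the $w^*$-compact hence bounded set $K$) and $w^*$-sequentially closed as a subset of $X^*$, so property~$\ptyEE$ forces $D$ to be $w^*$-closed. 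Hence Proposition~\ref{prop-Sc-CompleteImpliesMazur} applies and $(Y,w^*)$ has the Mazur property. The converse is exactly statement~(a): if $(Y,w^*)$ has the Mazur property and $Y$ is norm-closed, then $(X,\mu(X,Y))$ is complete, with no need for property~$\ptyEE$.

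The one subtlety worth spelling out is the passage from ``$w^*$-sequentially closed in $K$'' to ``$w^*$-sequentially closed in $X^*$'': since $K$ is $w^*$-closed in $X^*$, a set $D\subset K$ is $w^*$-sequentially closed relative to $K$ if and only if it is $w^*$-sequentially closed as a subset of $X^*$ (a $w^*$-convergent sequence in $D$ has its limit in $K$, hence the two notions coincide). This is the only place where one must be slightly careful, and it is routine.

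Part~(ii) is then just the ``for every norming and norm-closed $Y$'' version of part~(i): $X$ is fully Mackey complete means $(X,\mu(X,Y))$ is complete for every norming and norm-closed $Y\subset X^*$ — note that any such $Y$ is automatically $w^*$-dense, since a norming subspace is $w^*$-dense (its polar in $X$ is $\{0\}$) — and by part~(i) this holds if and only if $(Y,w^*)$ has the Mazur property for every such $Y$, i.e. $X$ is fully Mazur. I do not anticipate any real obstacle here; the proof is a short assembly of Proposition~\ref{prop-Sc-CompleteImpliesMazur}, statement~(a), and the observation on sequential closedness, and the writeup should be only a few lines.
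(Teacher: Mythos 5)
Your proposal is correct and is essentially the argument the paper intends (the corollary is stated without proof, as an immediate consequence of Proposition~\ref{prop-Sc-CompleteImpliesMazur} together with statement~(a)): property~$\ptyEE$ yields hypothesis~(ii) of the proposition exactly as you verify, including the routine identification of relative and absolute $w^*$-sequential closedness inside the $w^*$-compact set $K$, and part~(ii) follows since norming subspaces are $w^*$-dense. No gaps.
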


The characterizations of fully Mazur and fully Mackey complete Banach spaces given in the next two theorems are
our main results in this section.

\begin{theo}
	\label{profullyMazurnorminglysequential}
	The following statements are equivalent:
	\begin{enumerate}
	\item[(i)] $X$ is fully Mazur.
	\item[(ii)] $S_1(Y)=X^*$ for every norming and norm-closed subspace~$Y \subset X^*$. 
	\end{enumerate}
\end{theo}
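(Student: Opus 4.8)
The plan is to prove the two implications separately, using the Mazur property characterization via $\completionXY$ from Fact~\ref{fact:G}(ii) together with Facts~\ref{lem_CompImplyCont}--\ref{fact:11}.

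\emph{(i) $\Rightarrow$ (ii).} Suppose $X$ is fully Mazur but there is a norming and norm-closed subspace $Y\subset X^*$ with $S_1(Y)\neq X^*$; pick $x^*\in X^*\setminus S_1(Y)$. I would like to produce a norming and norm-closed subspace $Z\subset X^*$ for which $(Z,w^*)$ fails the Mazur property, contradicting (i). The natural candidate is $Z:=\SPAN(Y\cup\{x^*\})=Y\oplus[x^*]$ (the sum is direct and norm-closed since $Y$ is norm-closed and $x^*\notin Y\subset S_1(Y)^c$... well, $x^*\notin Y$ because constant sequences witness $Y\subset S_1(Y)$; norm-closedness of $Y\oplus[x^*]$ is routine as $[x^*]$ is finite-dimensional). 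Clearly $Z\supset Y$ is norming. Now define $f:Z\to\mathbb{R}$ by $f(y+tx^*):=t$ for $y\in Y$, $t\in\mathbb{R}$; this is the linear projection onto the $[x^*]$-coordinate, killing $Y$. It is not $w^*$-continuous on $Z$: otherwise $\ker f=Y$ would be $w^*$-closed, but $Y$ is $w^*$-dense in $X^*$ (being norming) and proper in $Z$. So it remains to check that $f$ is $w^*$-sequentially continuous on $Z$. Take a sequence $z_n=y_n+t_nx^*\to 0$ in $(Z,w^*)$; I must show $t_n\to 0$. If not, passing to a subsequence, $|t_n|\geq\varepsilon>0$, and then $x^*=t_n^{-1}(z_n-y_n)$, so $-t_n^{-1}y_n\to x^*$ weak$^*$, exhibiting $x^*$ as a $w^*$-limit of a sequence in $Y$ — contradicting $x^*\notin S_1(Y)$. (One subtlety: $w^*$-convergence in $Z$ is $w(Z,Z_*)$ where $Z$ sits in $X^*$; since $X\subset Z_*$ densely, $w(Z,X)$-convergence of a bounded sequence already gives the needed coordinate control, and $z_n\to 0$ in $w^*$ forces $(z_n)$ bounded by uniform boundedness.) This contradiction proves (ii).

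\emph{(ii) $\Rightarrow$ (i).} Assume $S_1(Y)=X^*$ for every norming norm-closed $Y$, and fix such a $Y$; I must show $(Y,w^*)$ has the Mazur property, i.e. every $w^*$-sequentially continuous $f\in Y^{\#}$ is $w^*$-continuous. Suppose not: there is a $w^*$-sequentially continuous but not $w^*$-continuous $f\in Y^{\#}$. By Fact~\ref{lem_CompImplyCont}'s reasoning (Hahn--Banach, using that $Y$ is norm-closed), $f\in Y^*$ and extends to some $x^{**}\in X^{**}$ with $x^{**}|_Y=f$. Since $f$ is not $w^*$-continuous on $Y$, we have $x^{**}\notin X\oplus Y^{\perp}$. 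By Fact~\ref{fact:11}, $Y':=Y\cap\ker(x^{**})=\ker f$ is norming; it is also norm-closed (kernel of the bounded functional $f$), hence $w^*$-dense in $X^*$, and by hypothesis $S_1(Y')=X^*$. Now I would derive a contradiction by producing a $w^*$-convergent sequence on which $f$ misbehaves: choose any $y_0\in Y$ with $f(y_0)=1$. Since $S_1(Y')=X^*$ and $Y'$ is $w^*$-dense, in particular we can $w^*$-approximate elements of $Y$ by sequences in $Y'$; more precisely, pick $z^*\in X^*$ with $\langle z^*,\cdot\rangle$ suitably chosen — actually the cleanest route: $y_0\in Y\subset X^*=S_1(Y')$, so there is a sequence $(u_n)$ in $Y'=\ker f$ with $u_n\xrightarrow{w^*}y_0$. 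Then $u_n\to y_0$ in $(Y,w^*)$ as well (convergence tested against $X$), so $w^*$-sequential continuity of $f$ gives $f(u_n)\to f(y_0)=1$; but $f(u_n)=0$ for all $n$, a contradiction. Hence $f$ must be $w^*$-continuous, and $(Y,w^*)$ has the Mazur property.

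\emph{Main obstacle.} The delicate points are bookkeeping about which weak$^*$ topology is in play: $(Y,w^*)$ means $w(Y,Y^*)$, while $S_1$ is defined using $w^*=w(X^*,X)$ inside $X^*$, and a sequence in $Y$ that is $w(Y,Y^*)$-null need not be obviously $w(X^*,X)$-null unless it is bounded — so one must invoke the uniform boundedness principle to get norm-boundedness of $w^*$-convergent sequences and thereby reconcile the two. I also expect the verification that $Y\oplus[x^*]$ is norm-closed and that $f$ is genuinely sequentially continuous (not just that $\ker f$ is sequentially closed) to require a little care with subsequence extraction, as sketched above. Once these are handled, both directions are short.
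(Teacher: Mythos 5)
Your proof is correct and follows essentially the same route as the paper's: for (i)$\Rightarrow$(ii) you build the same functional $y+tx^*\mapsto t$ on $Y\oplus[x^*]$ and show its $w^*$-sequential continuity by rescaling a subsequence to exhibit $x^*$ as a $w^*$-limit of a sequence in $Y$, and for (ii)$\Rightarrow$(i) you use the same Hahn--Banach extension plus Fact~\ref{fact:11} to pass to the norming kernel and contradict sequential continuity. The only superfluous point is your worry about $w(Z,Z_*)$ versus $w(X^*,X)$ and uniform boundedness: throughout the paper $(Y,w^*)$ simply carries the restriction of $w(X^*,X)$, so convergence is tested against $X$ and no boundedness argument is needed.
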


\begin{proof} (i)$\Rightarrow$(ii) 
	Suppose condition (ii) fails and fix a norming and norm-closed subspace $Z\subset X^*$ such that $X^*\setminus S_1(Z)\neq \emptyset$. 
	Fix $x^*\in X^*\setminus S_1(Z)$, take the subspace $Y:=Z\oplus [x^*]\subset X^*$ (which is norming and norm-closed) and the functional $f \in Y^*$ defined by 
	$$
		f(z^*+\lambda x^*):=\lambda
		\quad\mbox{ for all }z^*\in Z \mbox{ and }\lambda\in\mathbb{R}.
	$$ 
	Observe that $f$ is not $w^*$-continuous because $x^*\in X^*=\overline{Z}^{w^*}$, $f(x^*)=1$ and $\ker(f)=Z$.
	
	Let us show that $f$ is $w^*$-sequentially continuous. Let $\(y_n^*\)_{n\in \mathbb{N}}$ be a sequence in~$Y$ 
	which $w^*$-converges to some $y^*\in Y$. Write 
	$$
		y^*=z^*+\lambda x^*
		\quad\mbox{and}\quad
		y_n^*=z_n^*+\lambda_n x^*
	$$ 
	for some $z^*,z_n^*\in Z$ and $\lambda,\lambda_n\in \mathbb{R}$. Then
	$$
		\((z_n^*-z^*)+(\lambda_n-\lambda)x^*\)_{n\in \mathbb{N}} 
	$$ 
	is $w^*$-null. Since $z_n^*-z^* \in Z$ for all $n\in \mathbb{N}$ and $x^*\not \in S_1(Z)$, we conclude that $f(y_n^*)=\lambda_n \to f(y^*)=\lambda$
	as $n\to \infty$. This proves that $f$ is $w^*$-sequentially continuous. We have shown that $(Y,w^*)$ fails the Mazur property 
	and therefore that $X$ is not fully Mazur.
	
	(ii)$\Rightarrow$(i) Let $Y\subset X^*$ be a norming and norm-closed subspace. To prove that $(Y,w^*)$ has the Mazur property,
	take a $w^*$-sequentially continuous $f\in Y^{\#}$. Since $f\in Y^{*}$ (Fact~\ref{lem_CompImplyCont}), there is $x^{**}\in X^{**}$ such that $x^{**}|_Y=f$. By contradiction, 
	suppose $f$ is not $w^*$-continuous. Then $x^{**} \in X^{**}\setminus (X\oplus Y^\perp)$ and we can consider the norming 
	and norm-closed subspace $Z:=Y\cap {\rm ker}(x^{**})\subset X^*$ (Fact~\ref{fact:11}). 
	Condition~(ii) applied to~$Z$ ensures that $S_1(Z)=X^*$ and so the $w^*$-sequential continuity of~$f$ implies 
	that $Y=Z$, a contradiction which finishes the proof. 
\end{proof}

As an application, we generalize the result that Banach spaces having Fr\'{e}chet-Urysohn dual ball are fully Mazur (see \cite[Theorem~5]{gui-mon-ziz}):

\begin{cor}\label{cor:Efremov}
If $X$ has Efremov's property~$\ptyE$, then $S_1(Y)=X^*$ for every norming subspace $Y \subset X^*$. Consequently, $X$ is fully Mazur.
\end{cor}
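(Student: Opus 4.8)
The plan is to deduce the corollary from Theorem~\ref{profullyMazurnorminglysequential}, so the real content is the first assertion: if $X$ has property~$\ptyE$, then $S_1(Y) = X^*$ for every norming subspace $Y \subset X^*$. Note that Theorem~\ref{profullyMazurnorminglysequential} only requires us to control $S_1(Y)$ for norm-closed norming subspaces, but we may as well prove the stronger statement for arbitrary norming $Y$, since the argument does not simplify under the closedness hypothesis.

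First I would fix a norming subspace $Y \subset X^*$ and an arbitrary $x^* \in X^*$; the goal is to produce a sequence in~$Y$ that $w^*$-converges to~$x^*$. The norming hypothesis gives a constant $c > 0$ with $\|y^*\| = \sup\{ \langle x, y^*\rangle : x \in Y \cap B_{X^*}\}$ comparable to... wait, more usefully: norming means the unit ball $B_Y = Y \cap B_{X^*}$ has $w^*$-closed absolutely convex hull containing~$r B_{X^*}$ for some $r > 0$ (by the bipolar theorem, since the polar of $B_Y$ in~$X$ is a bounded neighborhood of~$0$). Consequently, for a fixed $x^*$, the set $C := \{ y^* \in Y : \|y^*\| \leq \|x^*\|/r \}$ is a convex bounded subset of~$X^*$ whose $w^*$-closure contains $\{z^* \in X^* : \|z^*\| \leq \|x^*\|\} \supset \{x^*\}$ — this is the key geometric point, and it is exactly where the norming hypothesis is used. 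Actually more carefully: $\overline{C}^{w^*} \supseteq \overline{\aco(B_Y)}^{w^*} \cdot (\|x^*\|/r) \supseteq B_{X^*} \cdot \|x^*\| \ni x^*$.

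Now property~$\ptyE$ applies: since $C$ is convex and bounded, $S_1(C) = \overline{C}^{w^*}$, and since $x^* \in \overline{C}^{w^*}$ we get $x^* \in S_1(C)$, i.e. there is a sequence in $C \subset Y$ that $w^*$-converges to~$x^*$. Hence $x^* \in S_1(Y)$. As $x^*$ was arbitrary, $S_1(Y) = X^*$. The "Consequently" then follows immediately by Theorem~\ref{profullyMazurnorminglysequential}: every norming and norm-closed subspace $Y$ satisfies $S_1(Y) = X^*$, so condition~(ii) of that theorem holds and $X$ is fully Mazur.

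The main obstacle — really the only subtle point — is verifying the inclusion $x^* \in \overline{C}^{w^*}$ cleanly from the norming hypothesis; once one has the right formulation (the $w^*$-closed absolutely convex hull of $B_Y$ absorbs $B_{X^*}$, via the bipolar theorem applied to the dual pair $\langle X^*, X\rangle$... or rather $\langle X, Y\rangle$ together with $w^*$-density of~$Y$), the rest is a direct invocation of the definition of property~$\ptyE$. One should also double-check that $C$ is genuinely bounded in norm (it is, by construction) so that property~$\ptyE$ is applicable, since $\ptyE$ is stated only for bounded convex sets.
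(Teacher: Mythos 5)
Your proof is correct and follows essentially the same route as the paper: both arguments reduce to showing that $\overline{Y\cap B_{X^*}}^{w^*}$ contains a ball $rB_{X^*}$ (you via the bipolar theorem, the paper via Hahn--Banach separation, which is the same fact) and then apply property~$\ptyE$ to the bounded convex set $Y\cap B_{X^*}$ (suitably rescaled) to conclude $S_1(Y)=X^*$, finishing with Theorem~\ref{profullyMazurnorminglysequential}. The only cosmetic difference is that you rescale to capture a fixed $x^*$, while the paper absorbs the scaling into the final step.
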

\begin{proof}
Let $Y\subset X^*$ be any norming subspace. By the Hahn-Banach
separation theorem, we have $\overline{Y\cap B_{X^*}}^{w^*} \supset \delta B_{X^*}$ for some $\delta>0$.
On the other hand, $\overline{Y\cap B_{X^*}}^{w^*}=S_1(Y\cap B_{X^*})$ (since $X$ has property~$\ptyE$)
and therefore $S_1(Y)=X^*$. Theorem~\ref{profullyMazurnorminglysequential} now applies to deduce that $X$ is fully Mazur.
\end{proof}

\begin{theo}
	\label{theofullyMackeyLK}
	$X$ is fully Mackey complete if and only if the following condition holds:
	\begin{enumerate}
	\item[$\ptyLK$] For every norming and norm-closed subspace $Y \subset X^\ast$ and every $x^\ast \in X^\ast\setminus Y$ there is
	$K \in \retX$ such that $K \subset Y \oplus [x^\ast ]$ and $x^\ast \in \overline{K \cap Y}^{w^\ast}$.
	\end{enumerate} 
\end{theo}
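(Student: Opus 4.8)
The plan is to characterize fully Mackey completeness by combining Fact~\ref{fact:XplusYperp} (which reduces completeness of $(X,\mu(X,Y))$ to the statement that every $x^{**}\in X^{**}$ whose restriction to every $K\in\retY$ is $w^*$-continuous already lies in $X\oplus Y^\perp$) with a duality argument that passes from such a "bad" $x^{**}$ to the absolutely convex $w^*$-compact set~$K$ required in $\ptyLK$. The crucial observation is that for a norming norm-closed $Y$ and $x^{**}\in X^{**}\setminus(X\oplus Y^\perp)$, the point $x^{**}|_Y$ is an element of $\widehat{X}_Y$ that is not $w^*$-continuous precisely when $x^{**}|_K$ fails to be $w^*$-continuous for some $K\in\retY$, and since $K\subset Y$ this is the same as saying there is a $w^*$-compact absolutely convex $K_0\subset Y$ on which $x^{**}$ is not $w^*$-continuous; enlarging $K_0$ to $K:=\overline{\mathrm{aco}}^{w^*}(K_0\cup\{x^*\})$ for a suitable $x^*\in X^*\setminus Y$ representing the "direction" of $x^{**}$ should give the set in $\ptyLK$.

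For the direction $\neg\ptyLK \Rightarrow$ not fully Mackey complete: suppose $Y$ is norming and norm-closed and $x^*\in X^*\setminus Y$ witnesses the failure of $\ptyLK$, so that for every $K\in\retX$ with $K\subset Y\oplus[x^*]$ we have $x^*\notin\overline{K\cap Y}^{w^*}$. First I would replace $Y$ by $\tilde{Y}:=Y\oplus[x^*]$ (still norming and norm-closed) and exhibit a non-$w^*$-continuous $f\in\widehat{X}_{\tilde Y}$, namely the functional $f(y^*+\lambda x^*):=\lambda$ with kernel $Y$. This $f$ is linear and not $w^*$-continuous since $x^*\in X^*=\overline{Y}^{w^*}$; what must be checked is that $f|_K$ is $w^*$-continuous for every $K\in\retY[\tilde Y]$, i.e. $K\subset \tilde Y$ absolutely convex $w^*$-compact. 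By the hypothesis $x^*\notin\overline{K\cap Y}^{w^*}$, and since $K\cap Y=K\cap\ker f=f^{-1}(0)\cap K$, one gets that $\{0\}$ is relatively $w^*$-open in $f(K)$ near~$0$; a scaling/convexity argument (using that $f$ is linear and $K$ absolutely convex, so $f(K)$ is a symmetric interval and $f^{-1}(t)\cap K$ is a $w^*$-compact translate of $f^{-1}(0)\cap(K-K)$-type sets) upgrades this to $w^*$-continuity of $f|_K$ on all of~$K$. Then Fact~\ref{fact:G} gives that $(X,\mu(X,\tilde Y))$ is not complete, so $X$ is not fully Mackey complete.

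For the converse direction $\ptyLK \Rightarrow$ fully Mackey complete: let $Y\subset X^*$ be norming and norm-closed and suppose $(X,\mu(X,Y))$ is not complete. By Fact~\ref{fact:XplusYperp} there is $x^{**}\in X^{**}\setminus(X\oplus Y^\perp)$ with $x^{**}|_K$ $w^*$-continuous for every $K\in\retY$. Choose $x^*\in X^*\setminus Y$ with $x^*\notin\ker(x^{**})$ (possible since $Y\not\subset\ker(x^{**})$, as otherwise $x^{**}|_Y=0$ would be $w^*$-continuous), and consider $Z:=Y\cap\ker(x^{**})$, which is norming and norm-closed by Fact~\ref{fact:11}, and note $x^*\notin Z$. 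Apply $\ptyLK$ to $Z$ and $x^*$: there is $K\in\retX$ with $K\subset Z\oplus[x^*]$ and $x^*\in\overline{K\cap Z}^{w^*}$. The key computation is then to evaluate $x^{**}$ on $K$: on $Z$ it vanishes, and on the direction $x^*$ it is nonzero, so the linear functional $x^{**}$ restricted to $Z\oplus[x^*]$ is (a scalar multiple of) the projection onto the $x^*$-coordinate with kernel $Z$; hence $x^{**}|_K$ being $w^*$-continuous together with $x^*\in\overline{K\cap Z}^{w^*}=\overline{K\cap\ker(x^{**})}^{w^*}$ forces $x^{**}(x^*)=0$, contradicting the choice of $x^*$. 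Therefore $(X,\mu(X,Y))$ is complete for every norming norm-closed $Y$, i.e. $X$ is fully Mackey complete.

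I expect the main obstacle to be the technical verification in the first direction that $x^*\notin\overline{K\cap Y}^{w^*}$ for all admissible $K$ genuinely yields $w^*$-continuity of $f|_K$ (not merely $w^*$-continuity at points of $f^{-1}(0)$): one must handle the fibers $f^{-1}(t)\cap K$ for $t\neq 0$ and argue, via absolute convexity of $K$ and linearity of $f$, that a net in $K$ which $w^*$-converges while $f$-values stay bounded away from a limit value can be renormalized to produce a net in $K\cap Y$ $w^*$-converging to a point near $x^*$. A clean way to organize this is to note that $f|_K$ is $w^*$-continuous iff its graph is $w^*$-closed in $K\times\mathbb{R}$, iff $f^{-1}([a,b])\cap K$ is $w^*$-closed for all reals $a\le b$; each such set is convex, so by the standard trick it suffices to rule out a $w^*$-convergent net in $f^{-1}([a,b])\cap K$ with limit in $f^{-1}((b,\infty))\cap K$ — and scaling by the ratio of $f$-values lands one in a dilate of $K$ where the separation hypothesis applies. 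The second direction is comparatively routine once the coordinate description of $x^{**}$ on $Z\oplus[x^*]$ is in place.
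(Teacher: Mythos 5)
Your overall strategy coincides with the paper's, and the first direction is essentially right: the renormalization you sketch is exactly the needed move. Concretely, from a net $x_\alpha^* = y_\alpha^* + \lambda_\alpha x^*$ in $K'$ that $w^*$-converges to $y^* + \lambda x^*$ while $\lambda_\alpha \to \lambda' \neq \lambda$ (boundedness of $(\lambda_\alpha)$ uses that $Y$ is norm-closed and $x^*\notin Y$), the net $\bigl(\tfrac{y_\alpha^*-y^*}{\lambda-\lambda'}\bigr)$ lies in $Y$, $w^*$-converges to \emph{exactly} $x^*$ (not merely "near" it, so the separation hypothesis applies directly), and is contained in the set $K:=(\lambda-\lambda')^{-1}\bigl(K'+M\|x^*\|B_{[x^*]}+\|y^*\|B_{[y^*]}\bigr)$, which is absolutely convex, $w^*$-compact and contained in $Y\oplus[x^*]$; this contradicts the assumed failure of $\ptyLK$ for that $K$. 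So that half is sound once written out.

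The converse direction, however, contains a genuine error in the choice of $x^*$. You pick $x^*\in X^*\setminus Y$ with $x^{**}(x^*)\neq 0$ and apply $\ptyLK$ to $Z=Y\cap\ker(x^{**})$ and this $x^*$, obtaining $K\in\retX$ with $K\subset Z\oplus[x^*]$ and $x^*\in\overline{K\cap Z}^{w^*}$. But your contradiction needs $x^{**}|_K$ to be $w^*$-continuous, and the only continuity available (via Fact~\ref{fact:XplusYperp}, or equivalently via $f\in\completionXY$) is for sets in $\retY$, i.e. absolutely convex $w^*$-compact subsets \emph{of $Y$}. Since your $x^*\notin Y$, the subspace $Z\oplus[x^*]$ is not contained in $Y$ and neither, in general, is $K$; nothing then forces $x^{**}|_K$ to be $w^*$-continuous, and the argument collapses. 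The fix is to choose $x^*\in Y\setminus Z$ instead — nonempty precisely because $Y\not\subset\ker(x^{**})$, which is the fact you already invoke in your parenthetical. With that choice $K\subset Z\oplus[x^*]\subset Y$, so $x^{**}|_K$ is $w^*$-continuous by hypothesis, yet $x^{**}$ vanishes on $K\cap Z$ while $x^{**}(x^*)\neq 0$ and $x^*\in\overline{K\cap Z}^{w^*}$, the desired contradiction; this is exactly how the paper runs this direction.
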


\begin{proof}
	Suppose $X$ fails condition $\ptyLK$. Take a norming and norm-closed subspace $Y_0\subset X^\ast$ and $x^\ast \in X^\ast \setminus Y_0$ such that for every 
	$K \in \retX$ we have
	\begin{equation}\label{eqn:laimpli}
		x^\ast \in \overline{K \cap Y_0}^{w^\ast}
		\quad \Longrightarrow \quad 
		K \nsubseteq Y_0 \oplus [x^\ast ].
	\end{equation}
	Set $Y:=Y_0 \oplus  [x^\ast]$ and define $f\in Y^*$ by declaring $f(y_0^*+\lambda x^\ast):=\lambda $ for every $y_0^* \in Y_0$ 
	and $\lambda \in \mathbb{R}$. 	Note that $f$ is not $w^*$-continuous (because $x^*\in X^*=\overline{Y_0}^{w^\ast}$, $f(x^*)=1$ and $\ker(f)=Y_0$).
	Thus, in order to prove that $(X,(\mu(X,Y))$ is not complete it is enough to show that $f|_{K'}$ is $w^*$-continuous for every $K' \in \retY$
	(Fact~\ref{fact:G}).
	
	By contradiction, suppose $f|_{K'}$ is not $w^*$-continuous. Then there is a net $\(x_\alpha^*= y_\alpha^\ast+ \lambda_\alpha x^\ast\)_{\alpha\in\Lambda}$
	in~$K'$ (where $y_\alpha^\ast\in Y_0$ and $\lambda_\alpha\in \mathbb{R}$) which $w^*$-converges to some $y^\ast+\lambda x^\ast \in K'$ 
	(where $y^\ast\in Y_0$ and $\lambda\in \mathbb{R}$) and such that $\(\lambda_\alpha\)_{\alpha\in\Lambda}$ does not converge to~$\lambda$. 
	Since $K'$ is bounded, so is $\(\lambda_\alpha\)_{\alpha\in\Lambda}$. Fix 
	$M>0$ such that $|\lambda_\alpha| \leq M$ for all $\alpha\in \Lambda$. By passing to a subnet if necessary we can assume that 
	$\(\lambda_\alpha\)_{\alpha\in\Lambda}$ converges to some $\lambda' \neq \lambda$. Set	
	$$ 
		K := \frac{K'+M\|x^\ast\|B_{[x^\ast]}+\|y^\ast\| B_{[y^\ast]}}{\lambda-\lambda'} \subset Y=Y_0\oplus[x^*]
	$$
	and notice that $K \in \retX$ since it is a sum of absolutely convex and $w^*$-compact sets.
	Moreover, the net
	$$ 
		\(\frac{y^*_\alpha-y^*}{\lambda-\lambda'}\)_{\alpha\in\Lambda}=\(\frac{x^*_\alpha-\lambda_\alpha x^\ast-y^*}{\lambda-\lambda'}\)_{\alpha\in\Lambda}
	$$
	is contained in~$K\cap Y_0$ and $w^*$-converges to~$x^*$, which 
	contradicts~\eqref{eqn:laimpli}. This shows that $(X,(\mu(X,Y))$ is not complete.	
	
	Conversely, we now prove that condition~$\ptyLK$ implies that $X$ is fully Mackey complete. The argument
	is similar to the proof of (ii)$\Rightarrow$(i) in Theorem~\ref{profullyMazurnorminglysequential}.
	Let $Y \subset X^*$ be a norming and norm-closed subspace and take any $f\in \completionXY$.
	Then $f\in Y^*$ (Fact~\ref{lem_CompImplyCont}) and so $f=x^{**}|_Y$ for some $x^{**}\in X^{**}$. If $f$ is not $w^*$-continuous, then $x^{**}\not\in X\oplus Y^\perp$
	and therefore $Y_0:=Y \cap \ker(x^{**})=\ker(f)$ is norming (Fact~\ref{fact:11}). Pick $x^*\in Y\setminus Y_0$. 
	Condition~$\ptyLK$ applied to $Y_0$ and $x^*$ ensures the existence of $K \in \retX$ such that $K \subset Y_0 \oplus [x^\ast] \subset Y$ and 
	$x^\ast \in \overline{K\cap Y_0}^{w^\ast}$. This contradicts the $w^*$-continuity of~$f|_K$, because $f$ vanishes on $K \cap Y_0$ and $f(x^\ast) \neq 0$.
	It follows that $f$ is $w^*$-continuous. This shows that $(X,\mu(X,Y))$ is complete (by Fact~\ref{fact:G}).
	\end{proof}	

\begin{cor}
\label{profullyMackeyhereditary}
	If $X$ is fully Mazur (resp. fully Mackey complete), then any closed subspace of~$X$
	is fully Mazur (resp. fully Mackey complete).
\end{cor}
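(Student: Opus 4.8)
The plan is to transport norming subspaces of the dual of a closed subspace back up to $X^*$ and then invoke the characterizations of fully Mazur and fully Mackey complete spaces obtained in Theorems~\ref{profullyMazurnorminglysequential} and~\ref{theofullyMackeyLK}. Fix a closed subspace $E\subset X$ and let $R\colon X^*\to E^*$, $R(x^*):=x^*|_E$, be the restriction operator; it is linear, norm-continuous, $w^*$-to-$w^*$ continuous, onto (Hahn--Banach) and $\ker R=E^\perp$. For a subspace $Y\subset E^*$ set $\widetilde Y:=R^{-1}(Y)\subset X^*$; then $E^\perp\subset\widetilde Y$, $R(\widetilde Y)=Y$, and $\widetilde Y$ is norm-closed whenever $Y$ is.

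The only non-formal ingredient, and the step I expect to be the main obstacle, is the claim that \emph{if $Y\subset E^*$ is norming for~$E$, then $\widetilde Y$ is norming for~$X$}. To prove it I would bound the pre-polar $A^{\circ}\subset X$ of the absolutely convex set $A:=\widetilde Y\cap B_{X^*}$ in the dual pair $\langle X,X^*\rangle$. Let $x\in A^{\circ}$. Since $B_{E^\perp}\subset A$, we get $\dist(x,E)=\sup_{x^*\in B_{E^\perp}}x^*(x)\le1$, so we may choose $e\in E$ with $\|x-e\|\le2$. For $y^*\in Y\cap B_{E^*}$, a norm-preserving Hahn--Banach extension $x^*\in B_{X^*}$ of~$y^*$ belongs to $\widetilde Y\cap B_{X^*}=A$, hence $|y^*(e)|=|x^*(e)|\le|x^*(x)|+\|x^*\|\,\|x-e\|\le3$; as $Y$ is $\delta$-norming for some $\delta\in(0,1]$, this gives $\|e\|\le3/\delta$ and thus $\|x\|\le3/\delta+2=:c$. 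So $A^{\circ}\subset cB_X$, and the bipolar theorem yields $\overline{\widetilde Y\cap B_{X^*}}^{\,w^*}=A^{\circ\circ}\supset(cB_X)^{\circ}=\tfrac1cB_{X^*}$, which is exactly the statement that $\widetilde Y$ is $\tfrac1c$-norming.

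Granting the claim, the fully Mazur case follows from Theorem~\ref{profullyMazurnorminglysequential}: given a norming and norm-closed $Y\subset E^*$, the subspace $\widetilde Y$ is norming and norm-closed in~$X^*$, so $S_1(\widetilde Y)=X^*$; since $R$ is $w^*$-to-$w^*$ continuous, $R(S_1(\widetilde Y))\subset S_1(R(\widetilde Y))=S_1(Y)$, and since $R$ is onto, $E^*=R(X^*)=R(S_1(\widetilde Y))\subset S_1(Y)$. Hence $S_1(Y)=E^*$ for every norming and norm-closed $Y\subset E^*$, so $E$ is fully Mazur by Theorem~\ref{profullyMazurnorminglysequential} applied to~$E$.

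For the fully Mackey complete case I would verify condition~$\ptyLK$ for~$E$ and apply Theorem~\ref{theofullyMackeyLK}. Let $Y\subset E^*$ be norming and norm-closed and let $x^*\in E^*\setminus Y$; pick $\widetilde x^*\in X^*$ with $R(\widetilde x^*)=x^*$, so that $\widetilde x^*\in X^*\setminus\widetilde Y$. Since $\widetilde Y$ is norming and norm-closed, condition~$\ptyLK$ for~$X$ provides $K\in\retX$ with $K\subset\widetilde Y\oplus[\widetilde x^*]$ and $\widetilde x^*\in\overline{K\cap\widetilde Y}^{\,w^*}$. Then $R(K)$ is absolutely convex and $w^*$-compact (continuous linear image of such a set), $R(K)\subset R(\widetilde Y)+[x^*]=Y\oplus[x^*]$ (a direct sum, as $x^*\notin Y$), and, by $w^*$-continuity of~$R$, $x^*=R(\widetilde x^*)\in\overline{R(K\cap\widetilde Y)}^{\,w^*}\subset\overline{R(K)\cap Y}^{\,w^*}$. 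Hence $R(K)\in\mathcal K(E^*)$ witnesses~$\ptyLK$ for~$E$, and Theorem~\ref{theofullyMackeyLK} yields that $E$ is fully Mackey complete.
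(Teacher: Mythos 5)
Your argument is correct and follows essentially the same route as the paper: both pull a norming, norm-closed subspace $Y\subset E^*$ back to the norm-closed, norming preimage $r^{-1}(Y)\subset X^*$ under the restriction operator and then apply Theorems~\ref{profullyMazurnorminglysequential} and~\ref{theofullyMackeyLK} together with the $w^*$-$w^*$-continuity (and surjectivity) of that operator. The only difference is that the paper quotes the fact that the preimage is norming from \cite[p.~269, Exercise~5.6]{fab-ultimo}, whereas you prove it directly (and correctly) via the bipolar theorem, and you spell out the transfer of $S_1(\cdot)$ and of condition~$\ptyLK$ which the paper leaves as ``follows at once.''
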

\begin{proof} Let $X_0 \subset X$ be a a closed subspace and denote by $r:X^*\to X_0^*$ the bounded linear operator defined by 
$r(x^*):=x^*|_{X_0}$ for every $x^*\in X^*$. Given any norm-closed subspace $Y_0\subset X_0^*$,
the norm-closed subspace $Y:=r^{-1}(Y_0)\subset X^*$ is norming (for~$X$) whenever $Y_0$ is norming (for~$X_0$), see e.g. \cite[p.~269, Exercise 5.6]{fab-ultimo}. 
The conclusion now follows at once from Theorems~\ref{profullyMazurnorminglysequential}
and~\ref{theofullyMackeyLK}, bearing in mind the $w^*$-$w^*$-continuity of~$r$.
\end{proof}

\section{Banach spaces which are not fully Mackey complete}\label{section:3}

The following technical result provides a sufficient condition on a Banach space to fail fully Mackey completeness.
Recall that a topological space is said to be {\em countably compact} if every sequence in it has a cluster point.

\begin{theo}
	\label{TheoGeneralizationNonfullyMackey}	
	Let $T$ be a countably compact topological space with a distinguished point $\infty \in T$. Suppose there is a function $f\colon T \to X$ satisfying: 
	\begin{enumerate}
	\item[(i)] $f(\infty)=0$; 
	\item[(ii)] $Y:=\lbrace x^* \in X^*: \, x^* \circ f \mbox{ is continuous}\rbrace$ is norming and norm-closed; 
	\item[(iii)] there exist $\varepsilon>0$ and $x_\infty^* \in X^*$ such that $D:=\lbrace t \in T: x_\infty^*(f(t))> \varepsilon \rbrace$ 
	intersects every $\mathcal{G}_\delta$-set containing~$\infty$.
	\end{enumerate}
	Then $X$ is not fully Mackey complete.
\end{theo}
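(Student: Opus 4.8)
The plan is to invoke Theorem~\ref{theofullyMackeyLK}: it suffices to show that $X$ fails condition~$\ptyLK$, and I would do this with the norming, norm‑closed subspace $Y$ supplied by~(ii) together with the functional $x_\infty^*$ from~(iii). First, $x_\infty^*\in X^*\setminus Y$: if $x_\infty^*\circ f$ were continuous then $x_\infty^*(f(\infty))=0$ by~(i), so $D=\{t\in T:x_\infty^*(f(t))>\varepsilon\}$ would miss an open neighbourhood of $\infty$, contradicting~(iii) (every open set is a $\mathcal{G}_\delta$‑set). Thus the task reduces to proving
\begin{equation}\label{eqn:noK}
\text{there is no } K\in\retX \text{ with } K\subseteq Y\oplus[x_\infty^*] \text{ and } x_\infty^*\in\overline{K\cap Y}^{w^*},
\end{equation}
which I would attack by contradiction. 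Assuming such a $K$ exists, set $M:=\sup_{k\in K}\|k\|$ and $d:=\dist(x_\infty^*,Y)>0$. The linear map $\Phi\colon Y\to C_b(T)$ (bounded continuous real functions on $T$, sup norm), $\Phi(y^*)=y^*\circ f$, has closed graph since norm convergence in $C_b(T)$ forces pointwise convergence, hence is bounded. Writing $k=z_k+\lambda_k x_\infty^*$ with $z_k\in Y$ gives $|\lambda_k|\le M/d$ and $\|z_k\|\le M(1+\|x_\infty^*\|/d)$ for $k\in K$, and a net in $K\cap Y$ that $w^*$‑converges to $x_\infty^*$ shows $\|x_\infty^*\circ f\|_\infty<\infty$; so there is $C>0$ with $\|k\circ f\|_\infty\le C$ for all $k\in K$ and $\|x_\infty^*\circ f\|_\infty\le C$. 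Fix $\delta_0:=\varepsilon^2/\bigl(4(C+\varepsilon)\bigr)$, so $0<\delta_0<\varepsilon/4$.

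The core is a recursion producing, for $n\ge1$, points $t_n\in D$, functionals $w_n\in K\cap Y$, and open neighbourhoods $U_n$ of $\infty$ (with $U_0:=T$) such that: $t_n\in D\cap U_{n-1}$ (possible because $U_{n-1}$ is an open, hence $\mathcal{G}_\delta$, neighbourhood of $\infty$, so~(iii) applies), whence $x_\infty^*(f(t_n))>\varepsilon$; $w_n\in K\cap Y$ with $|w_n(f(t_i))-x_\infty^*(f(t_i))|<\delta_0$ for $i\le n$ (possible because $x_\infty^*\in\overline{K\cap Y}^{w^*}$), whence $w_n(f(t_i))>\varepsilon-\delta_0$ for $i\le n$; and $U_n:=U_{n-1}\cap\bigcap_{j\le n}\{t:|w_j(f(t))|<\delta_0\}$, which is open, contained in $U_{n-1}$, and contains $\infty$ since $w_j(f(\infty))=w_j(0)=0$ by~(i). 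Then $G:=\bigcap_{n\ge1}U_n$ is a $\mathcal{G}_\delta$‑set containing $\infty$, so by~(iii) pick $s\in D\cap G$. Countable compactness of $T$ gives a cluster point $t_\omega\in T$ of $(t_n)$, and $w^*$‑compactness of $K$ gives a $w^*$‑cluster point $w_\omega\in K$ of $(w_n)$. Passing to these: $w_\omega(f(t_i))\ge\varepsilon-\delta_0$ for all $i$; for every $t\in G$ one has $|w_j(f(t))|<\delta_0$ for all $j$, hence $|w_\omega(f(t))|\le\delta_0$ (in particular at $t=s$); and since the tail $\{t_i:i>m\}$ lies in $U_m$, $t_\omega\in\overline{U_m}\subseteq\{t:|w_j(f(t))|\le\delta_0\}$ for $j\le m$, so $|w_\omega(f(t_\omega))|\le\delta_0$.

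To close, write $w_\omega=z_\omega+\lambda_\omega x_\infty^*$ with $z_\omega\in Y$; then $z_\omega\circ f$ is continuous, so $(z_\omega\circ f)(t_\omega)$ is a cluster value of $\bigl((z_\omega\circ f)(t_i)\bigr)_i$. Comparing $z_\omega(f(t_i))=w_\omega(f(t_i))-\lambda_\omega x_\infty^*(f(t_i))\ge\varepsilon-\delta_0-|\lambda_\omega|C$ with $z_\omega(f(t_\omega))=w_\omega(f(t_\omega))-\lambda_\omega x_\infty^*(f(t_\omega))\le\delta_0+|\lambda_\omega|C$ forces $|\lambda_\omega|\ge(\varepsilon-2\delta_0)/(2C)=:\mu_0>0$. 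On $G\cap D$ we have $|x_\infty^*\circ f|>\varepsilon$ and $|w_\omega\circ f|\le\delta_0$, so, since $z_\omega\circ f=w_\omega\circ f-\lambda_\omega(x_\infty^*\circ f)$,
\[
|z_\omega\circ f|\ \ge\ |\lambda_\omega|\,|x_\infty^*\circ f|-|w_\omega\circ f|\ \ge\ \mu_0\varepsilon-\delta_0\ =\ \frac{\varepsilon^2}{4C}\ >\ 0\qquad\text{on } G\cap D.
\]
But $z_\omega\circ f$ is continuous, $(z_\omega\circ f)(\infty)=z_\omega(f(\infty))=0$, and $\infty\in\overline{G\cap D}$: for any open $W\ni\infty$, the set $G\cap W$ is a $\mathcal{G}_\delta$‑neighbourhood of $\infty$, so $D\cap G\cap W\neq\emptyset$ by~(iii). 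This contradiction proves~\eqref{eqn:noK}, hence the theorem.

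I expect the main obstacle to be orchestrating the two independent compactness extractions — the cluster point $t_\omega$ of the sequence $(t_n)$ in the countably compact $T$, and the $w^*$‑cluster point $w_\omega$ of $(w_n)$ in the $w^*$‑compact $K$ — so that the continuity of the members of $Y$ genuinely clashes with the discontinuity of $x_\infty^*\circ f$. In doing so it is essential to use~(iii) in its full strength (that $D$ meets every $\mathcal{G}_\delta$‑neighbourhood of $\infty$, not merely every open one): it keeps the recursion going, produces $s$, and, most importantly, places $\infty$ in $\overline{G\cap D}$. The remaining delicate point is quantitative bookkeeping: choosing $\delta_0$ against the uniform bound $C$ coming from the closed graph theorem so that the final estimate closes regardless of the value (and sign) of $\lambda_\omega$.
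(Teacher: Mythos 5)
Your proposal is correct and takes essentially the same route as the paper: reduce via Theorem~\ref{theofullyMackeyLK} to showing that $\ptyLK$ fails for the pair $(Y,x_\infty^*)$, then run the same induction (points $t_n\in D\cap U_{n-1}$, functionals in $K\cap Y$ approximating $x_\infty^*$ at $f(t_1),\dots,f(t_n)$, shrinking neighborhoods $U_n$ of~$\infty$), extract cluster points in $T$ and in $K$, and contradict continuity at~$\infty$ using that $D$ meets every $\mathcal{G}_\delta$-set containing~$\infty$. The only deviation is bookkeeping: the paper lets the tolerance decay like $\varepsilon/n$, so the cluster functional vanishes on $\bigcap_n\overline{U_n}$ and is shown to lie outside $Y\oplus[x_\infty^*]$ with no uniform bounds needed, whereas you keep a fixed $\delta_0$ and compensate with the closed-graph bound on $y^*\mapsto y^*\circ f$ — both arguments are sound.
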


\begin{proof}
	By Theorem \ref{theofullyMackeyLK}, it is enough to check that $X$ does not have property~$\ptyLK$. 
	Note that $x_\infty^*\circ f$ is not continuous at~$\infty$ and so $x_\infty^*\not\in Y$.
	Let $K$ be a bounded subset of~$Y \oplus [x^*_\infty]$ with $x^*_\infty \in  \overline{K \cap Y}^{w^\ast}$. 
	We will prove that $\overline{K \cap Y}^{w^\ast}$ is not contained in $Y \oplus [x^*_\infty]$ and, therefore,
	$K$ is not $w^*$-compact.
	
	To this end, we first construct by induction a sequence $\(t_n\)_{n\in \mathbb{N}}$ in~$D$, a sequence $\(x_n^*\)_{n\in\mathbb{N}}$ in~$K\cap Y$ and a decreasing
	sequence $\(U_n\)_{n\in \mathbb{N}}$ of open neighborhoods of~$\infty$ such that, for each $n\in \mathbb{N}$, we have:
	\begin{enumerate}
		\item[($a_n$)] $x_n^*(f(t_j))>\varepsilon$ for every $j\leq n$;
		\item[($b_n$)] $|x_n^*(f(t))| \leq \frac{\varepsilon}{n}$ for every $t\in \overline{U_n}$;
		\item[($c_n$)] $t_n\in U_{n-1}$ (with the convention $U_0:=T$).
	\end{enumerate} 
	
	For the first step, take any $t_1 \in D$. Since $x^*_\infty \in  \overline{K \cap Y}^{w^\ast}$, we can pick $x_1^* \in K \cap Y$ 
	such that $x_1^*(f(t_1))>\varepsilon$. 
	By the continuity of $x_1^* \circ f$, there is an open neighborhood $U_1$ of~$\infty$ such that $|x_1^*(f(t))| \leq \varepsilon$ for every $t\in \overline{U_1}$. 
	Suppose now that, for some $n\in \mathbb{N}$, we have already chosen $t_1, t_2, \dots , t_n \in D$, $x_1^*, x_2^*, \dots, x_n^* \in K \cap Y$ and
	$U_1 \supset U_2 \supset \dots \supset U_n$ open neighborhoods of $\infty$ such that ($a_i$), ($b_i$) and~($c_i$) hold for every $i \leq n$.
	Pick an arbitrary $t_{n+1} \in D\cap U_n$ and choose $x_{n+1}^* \in K\cap Y$ with $x_{n+1}^*(f(t_j))>\varepsilon$ 
	for every $j \leq n+1$ (bear in mind that the $t_i$'s belong to~$D$ and $x^*_\infty \in  \overline{K \cap Y}^{w^\ast}$). 
	Now, the continuity of $x_{n+1}^* \circ f$ ensures the existence of an open neighborhood $U_{n+1}$ of $\infty$ 
	contained in~$U_n$ such that $|x_{n+1}^*(f(t))| \leq \frac{\varepsilon}{n+1}$ for every $t\in \overline{U_{n+1}}$. This finishes the inductive construction.
	
	Let $x^*\in \overline{K\cap Y}^{w^*}$ be any $w^*$-cluster point of the sequence $\(x_n^*\)_{n\in \mathbb{N}}$. Then
	\begin{enumerate}
		\item[(a)] $x^*(f(t_n))\geq \varepsilon$ for every $n\in \mathbb{N}$;
		\item[(b)] $x^*(f(t))=0$ for every $t \in \bigcap_{n\in \mathbb{N}} \overline{U_n}$.
	\end{enumerate}
	We claim that $x^* \notin Y \oplus [x_\infty^*]$. 
	Our proof is by contradiction. Suppose $x^*+\lambda x_\infty^* \in Y$ for some $\lambda \in \mathbb{R}$. 
	By (a) and~(b), $x^*\circ f$ is not continuous at any cluster 
	point of the sequence $\(t_n\)_{n\in\mathbb{N}}$ (such cluster points exist since $T$ is countably compact and, by construction, they are contained in $\bigcap_{n\in \mathbb{N}} \overline{U_n}$), hence $\lambda \neq 0$.
	Observe that $|(x^*+\lambda x_\infty^*)(f(t))|> |\lambda| \varepsilon$ for every $t \in D\cap \bigcap_{n\in \mathbb{N}} U_n$. 
	But $\infty \in \overline{D\cap \bigcap_{n\in \mathbb{N}} U_n}$ (because $D$ intersects every $\mathcal{G}_\delta$-set containing $\infty$) and therefore  
	$(x^*+\lambda x_\infty^*)\circ f$ cannot be continuous at~$\infty$, a contradiction. 	
\end{proof}

The following corollary was already shown in \cite[Corollary~5(ii)]{gui-mon}. Here we prove it via the unifying approach of Theorem~\ref{TheoGeneralizationNonfullyMackey}.

\begin{cor}
	\label{corl1uncountable}
	$\ell_1(\Gamma)$ is not fully Mackey complete whenever $\Gamma$ is uncountable.
\end{cor}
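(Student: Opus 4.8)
The plan is to apply Theorem~\ref{TheoGeneralizationNonfullyMackey} to $X=\ell_1(\Gamma)$ with the simplest conceivable auxiliary data. Let $T$ be the one-point compactification of the discrete space~$\Gamma$, let $\infty$ denote the point at infinity, and let $(e_\gamma)_{\gamma\in\Gamma}$ be the canonical unit vectors of~$\ell_1(\Gamma)$. Define $f\colon T\to\ell_1(\Gamma)$ by $f(\gamma):=e_\gamma$ for $\gamma\in\Gamma$ and $f(\infty):=0$. Since $T$ is compact it is in particular countably compact, and condition~(i) of the theorem holds by construction. So everything reduces to verifying conditions~(ii) and~(iii).

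For~(ii), I would identify $\ell_1(\Gamma)^*$ with $\ell_\infty(\Gamma)$ in the usual way. Given $x^*=(c_\gamma)_{\gamma\in\Gamma}\in\ell_\infty(\Gamma)$, the map $x^*\circ f\colon T\to\mathbb{R}$ takes the value $c_\gamma$ at each $\gamma\in\Gamma$ and the value $0$ at~$\infty$. As every point of~$\Gamma$ is isolated in~$T$, this map is continuous if and only if it is continuous at~$\infty$, that is, if and only if $\{\gamma\in\Gamma\colon|c_\gamma|\geq\varepsilon\}$ is finite for every $\varepsilon>0$. Hence $Y=c_0(\Gamma)$, which is norm-closed in~$\ell_\infty(\Gamma)$ and norming for~$\ell_1(\Gamma)$: indeed $\sup\{x^*(x)\colon x^*\in c_0(\Gamma)\cap B_{\ell_\infty(\Gamma)}\}=\|x\|_1$ for every $x\in\ell_1(\Gamma)$, as finitely supported sign vectors show. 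So~(ii) holds.

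For~(iii), take $\varepsilon:=1/2$ and let $x_\infty^*\in\ell_\infty(\Gamma)$ be the vector all of whose coordinates equal~$1$. Then $x_\infty^*(f(\gamma))=1>1/2$ for all $\gamma\in\Gamma$ while $x_\infty^*(f(\infty))=0$, so $D=\Gamma$. It remains to see that $D$ meets every $\mathcal{G}_\delta$-set $G$ containing~$\infty$. Writing $G$ as a countable intersection of basic open neighbourhoods of~$\infty$ (each the union of~$\{\infty\}$ with the complement of a finite subset of~$\Gamma$), one gets $G\cap\Gamma=\Gamma\setminus C$ for some countable set $C\subset\Gamma$, which is nonempty because $\Gamma$ is uncountable. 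Thus $D\cap G\neq\emptyset$, and Theorem~\ref{TheoGeneralizationNonfullyMackey} yields that $\ell_1(\Gamma)$ is not fully Mackey complete.

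I do not expect any genuine obstacle: the argument just repackages the elementary facts that $c_0(\Gamma)$ is a norming, norm-closed subspace of $\ell_1(\Gamma)^*=\ell_\infty(\Gamma)$ and that the point at infinity of the one-point compactification of an uncountable discrete space is not a $\mathcal{G}_\delta$ point. The only spot calling for a little care is the explicit description of the $\mathcal{G}_\delta$-sets around~$\infty$, which is routine point-set topology. Alternatively, since fully Mackey completeness passes to closed subspaces by Corollary~\ref{profullyMackeyhereditary} and $\ell_1(\omega_1)$ embeds isometrically into $\ell_1(\Gamma)$ for uncountable~$\Gamma$, it would be enough to treat the case $\Gamma=\omega_1$; but the direct argument above is just as short.
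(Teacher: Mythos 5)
Your argument is correct and coincides with the paper's own proof: same choice of $T$ as the one-point compactification of discrete $\Gamma$, same $f$, the same identification $Y=c_0(\Gamma)$, and the same witness $x_\infty^*=\chi_\Gamma$ with $0<\varepsilon<1$, so that $D=\Gamma$ meets every $\mathcal{G}_\delta$-set around~$\infty$. You merely spell out the routine verifications in slightly more detail.
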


\begin{proof}
	Let $T:=\Gamma \cup \{\infty\}$ be the one-point compactification of the set $\Gamma$ equipped with the discrete topology.
	Define $f:T \to \ell_1(\Gamma)$ by declaring $f(\gamma):=e_\gamma$ for all $\gamma\in \Gamma$ and $f(\infty):=0$.
	Then $\{x^*\in \ell_\infty(\Gamma): x^*\circ f \mbox{ is continuous}\}=c_0(\Gamma)$ is norming and norm-closed.
	Take $x_\infty^* := \chi_\Gamma \in \ell_\infty (\Gamma)$ and fix any $0<\varepsilon<1$. 
	Since $\Gamma$ is uncountable, $\{\infty\}$ is not a $\mathcal{G}_\delta$-set 
	and so $\lbrace t\in T: x_\infty^*(f(t))> \varepsilon \rbrace=\Gamma$ intersects every $\mathcal{G}_\delta$-set
	containing~$\infty$. The result now follows from Theorem~\ref{TheoGeneralizationNonfullyMackey}.	
\end{proof}

By putting together Corollaries~\ref{profullyMackeyhereditary} and~\ref{corl1uncountable}, we get:

\begin{cor}\label{corNol1}
If $X$ is fully Mackey complete, then it contains no subspace isomorphic to~$\ell_1(\omega_1)$.
\end{cor}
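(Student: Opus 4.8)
The plan is a short proof by contradiction that combines Corollaries~\ref{profullyMackeyhereditary} and~\ref{corl1uncountable}. Suppose, aiming for a contradiction, that $X$ is fully Mackey complete and contains a subspace $Z\subset X$ isomorphic to~$\ell_1(\omega_1)$. Since $\ell_1(\omega_1)$ is complete, so is~$Z$, and hence $Z$ is a (norm-)closed subspace of~$X$. By Corollary~\ref{profullyMackeyhereditary}, $Z$ is itself fully Mackey complete.

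Next I would note that fully Mackey completeness is invariant under linear isomorphisms of Banach spaces. This is because all the data involved in the definition depend only on the underlying dual pair: replacing the norm on a space by an equivalent one leaves unchanged the collection of norm-closed subspaces of the dual (equivalent norms have the same closed sets) as well as the collection of norming subspaces (equivalence of norms is transitive), and the Mackey topology $\mu(X,Y)$ is defined purely in terms of the dual pair $\langle X,Y\rangle$. Consequently, $Z$ being isomorphic to~$\ell_1(\omega_1)$, we conclude that $\ell_1(\omega_1)$ is also fully Mackey complete.

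But this contradicts Corollary~\ref{corl1uncountable} applied with $\Gamma=\omega_1$, since $\omega_1$ is uncountable and therefore $\ell_1(\omega_1)$ is \emph{not} fully Mackey complete. This contradiction proves that a fully Mackey complete Banach space can contain no subspace isomorphic to~$\ell_1(\omega_1)$.

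There is essentially no obstacle here beyond invoking the two preceding corollaries; the only point that requires a (routine) check is the isomorphism-invariance of the property, which follows from the observations above about how each ingredient depends on the norm.
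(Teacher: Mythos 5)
Your argument is correct and is exactly the paper's: the corollary is obtained by combining Corollary~\ref{profullyMackeyhereditary} with Corollary~\ref{corl1uncountable}. Your explicit remark that fully Mackey completeness is an isomorphic invariant (since norming subspaces, norm-closed subspaces of the dual, and the Mackey topologies are preserved under the adjoint of an isomorphism) is the routine step the paper leaves implicit, and you justify it adequately.
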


\begin{cor}
	$C([0,\w_1])$ is not fully Mackey complete.
\end{cor}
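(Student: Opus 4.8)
The plan is to apply Theorem~\ref{TheoGeneralizationNonfullyMackey} directly, with $X=C([0,\w_1])$ and $T=[0,\w_1]$ (the ordinal interval with its order topology), taking the distinguished point to be $\infty:=\w_1$. The space $[0,\w_1]$ is compact, hence countably compact, so the hypothesis on $T$ is met. The function $f\colon [0,\w_1]\to C([0,\w_1])$ that I would use sends each $\alpha$ to (a normalized version of) the characteristic-type function that ``detects'' the interval $[0,\alpha]$; concretely, one natural choice is $f(\alpha):=\mathbf{1}_{[0,\alpha]}$ for $\alpha<\w_1$ and $f(\w_1):=0$, or the slight variant $f(\alpha)=\mathbf{1}_{[0,\alpha]}-\mathbf{1}_{[0,\w_1]}$ which makes $f(\w_1)=0$ on the nose. (One should double-check which of these makes $x^*\circ f$ continuous at $\w_1$ fail for the functional chosen in step (iii), and adjust signs accordingly; this is the only fiddly point.) Condition~(i), $f(\infty)=0$, holds by construction.

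For condition~(ii) I need to identify $Y:=\{\mu\in C([0,\w_1])^*=M([0,\w_1]) : \mu\circ f \text{ is continuous}\}$ and check it is norming and norm-closed. Norm-closedness is automatic since the set of measures for which $\alpha\mapsto \mu(f(\alpha))$ is continuous is a norm-closed subspace (a uniform limit of continuous functions on $T$ is continuous). For normingness, the key observation is that $Y$ contains all the evaluation functionals $\delta_t$ for $t<\w_1$: indeed $\alpha\mapsto f(\alpha)(t)=\mathbf{1}_{[0,\alpha]}(t)$ equals $\mathbf{1}_{[t,\w_1]}(\alpha)$ as a function of $\alpha$, which is continuous at every point except possibly at $\alpha=t$ — here I must be careful. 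This is where the precise choice of $f$ matters: one wants $f$ built so that $x^*\circ f$ is continuous for a rich enough family of $x^*$ (e.g., all $\delta_t-\delta_{t'}$, or all finitely supported measures giving zero total mass, or measures supported on a countable set) to guarantee $\sup\{x^*(h):x^*\in Y\cap B_{X^*}\}$ is comparable to $\|h\|_\infty$. A clean route: show $Y\supseteq\{\mu\in M([0,\w_1]):\mu(\{\w_1\})=0\text{ and }|\mu|\text{ is supported on a countable set}\}$ or similar, which is easily norming since every $h\in C([0,\w_1])$ attains values arbitrarily close to $\|h\|_\infty$ at points below $\w_1$, and point masses at those points lie in $Y$ (up to the continuity subtlety, fixable by passing to differences $\delta_\beta-\delta_\alpha$ and using that $\alpha\mapsto h(\alpha)$ is eventually constant).

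For condition~(iii), the point is that $\{\w_1\}$ is not a $\mathcal{G}_\delta$ in $[0,\w_1]$ — any countable intersection of neighborhoods of $\w_1$ contains a tail $[\beta,\w_1]$ for some $\beta<\w_1$, hence is uncountable and in particular meets any club-like set. I would take $x_\infty^*:=\delta_{\w_1}$ (or $\delta_{\w_1}$ composed appropriately with the sign convention of $f$) and $\varepsilon\in(0,1)$, so that $D=\{t\in[0,\w_1]:x_\infty^*(f(t))>\varepsilon\}$ is a tail $(\beta_0,\w_1]$ or all of $[0,\w_1)$ depending on the variant; in either case $D$ contains a tail, hence intersects every $\mathcal{G}_\delta$-set containing $\w_1$. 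With (i), (ii), (iii) verified, Theorem~\ref{TheoGeneralizationNonfullyMackey} yields that $C([0,\w_1])$ is not fully Mackey complete. The main obstacle I anticipate is purely bookkeeping: pinning down exactly at which ordinals $t\mapsto \mu(f(t))$ can fail to be continuous and choosing $f$ (and the sign in $x_\infty^*$) so that simultaneously $Y$ is visibly norming and the ``bad'' functional $x_\infty^*$ genuinely lies outside $Y$ with $D$ large; none of this is deep, but it requires writing $f(\alpha)(t)$ out carefully as a function of $\alpha$ for fixed $t$ and for $t=\w_1$.
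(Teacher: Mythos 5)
Your proposal follows the paper's proof essentially verbatim: the paper takes $f(\alpha)=\chi_{(\alpha,\w_1]}$ for $\alpha<\w_1$ (your second variant, up to sign), $f(\w_1)=0$, $x_\infty^*=\delta_{\w_1}$, and $D=[0,\w_1)$, and invokes Theorem~\ref{TheoGeneralizationNonfullyMackey} exactly as you describe. The bookkeeping you flag resolves as you anticipate, but only your second variant of $f$ works (with $f(\alpha)=\chi_{[0,\alpha]}$ and $f(\w_1)=0$ the map $\alpha\mapsto f(\alpha)(t)$ jumps at $\alpha=\w_1$ for every fixed $t<\w_1$, so $Y$ would contain no point masses at all), and the norming family inside $Y$ is precisely $\{\delta_{\beta+1}:\beta<\w_1\}$ rather than all countably supported measures vanishing at $\w_1$, since $\alpha\mapsto\chi_{(\alpha,\w_1]}(t)$ is continuous exactly when $t$ is $0$ or a successor ordinal (it is discontinuous at $\alpha=t$ for limit $t$).
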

\begin{proof}
	Let $T:=[0,\w_1]$ and $\infty:=\w_1$. Define $f:T \to C([0,\w_1])$ by $f(\alpha):=\chi_{(\alpha, \w_1]}$ for all $\alpha<\omega_1$
	and $f(\infty):=0$. The subspace 
	$$
		Y:=\{x^*\in C([0,\w_1])^*: \, x^*\circ f \mbox{ is continuous}\}
	$$
	is norm-closed (bear in mind that $f$ is bounded) and norming, because it contains the set $\{\delta_{\beta+1}:\beta<\omega_1\}$. 
	Take $x_\infty^* := \delta_{\w_1}$ and fix any $0<\varepsilon<1$. 
	Then the set $\{t\in T: x_\infty^*(f(t))>\varepsilon\}=[0,\omega_1)$ intersects every $\mathcal{G}_\delta$-set containing~$\infty$ 
	(since $\{\infty\}$ is not a $\mathcal{G}_\delta$-set).  The result now follows from Theorem \ref{TheoGeneralizationNonfullyMackey}.
\end{proof}

We now focus on dual Banach spaces. Since $X$ is a norm-closed subspace of~$X^{**}$
which is norming for~$X^*$, Theorem~\ref{profullyMazurnorminglysequential} implies that $X$
is $w^*$-sequentially dense in~$X^{**}$ whenever $X^*$ is fully Mazur. In fact, we have the following:

\begin{theo}
\label{theofullyCorsondual}
If $X^*$ is fully Mackey complete, then $X$ is $w^*$-sequentially dense in~$X^{**}$.
\end{theo}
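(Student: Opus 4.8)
The plan is to argue by contradiction: suppose $X^*$ is fully Mackey complete but $X$ is \emph{not} $w^*$-sequentially dense in~$X^{**}$, fix $x^{**}\in X^{**}\setminus S_1(X)$ (so $x^{**}\notin X$), and derive a contradiction. The first move is a reduction: $X$ must contain no copy of~$\ell_1$. Indeed, if $\ell_1\hookrightarrow X$ then $\ell_\infty\cong(\ell_1)^*$ is a quotient of~$X^*$ by Hahn--Banach, and since $\ell_\infty$ contains a copy of~$\ell_1(\mathfrak{c})$ (built, say, from an independent family of $\mathfrak{c}$ subsets of~$\mathbb{N}$) while $\ell_1(\Gamma)$ lifts through any quotient map, $X^*$ would contain $\ell_1(\mathfrak{c})\supseteq\ell_1(\omega_1)$, contradicting Corollary~\ref{corNol1}. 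So from now on $X$, and hence also the norm-closed norming subspace $Z:=X\oplus[x^{**}]\subset X^{**}$, contains no copy of~$\ell_1$.

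Next I would invoke Theorem~\ref{theofullyMackeyLK}: since $X^*$ is fully Mackey complete it has property~$\ptyLK$, which, applied to the norming and norm-closed subspace $X\subset X^{**}$ and to $x^{**}\in X^{**}\setminus X$, yields an absolutely convex $w^*$-compact set $K\subset Z$ with $x^{**}\in\overline{K_0}^{w^*}$, where $K_0:=K\cap X$ (bounded and convex). The point that makes the argument work is that $Z$ is a one-dimensional extension of~$X$: writing $g\colon Z\to\mathbb{R}$ for the coefficient functional ($\ker g=X$, $g(x^{**})=1$), one has $Z^*=X^*+[g]$, with $X^*$ viewed inside~$Z^*$ in the canonical way. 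Now let $(z_n)$ be any sequence in~$K_0$. Since $Z$ contains no copy of~$\ell_1$, Rosenthal's $\ell_1$-theorem extracts a weakly Cauchy subsequence $(z_{n_k})$; testing against $X^*\subset Z^*$ shows $z_{n_k}\to z^*$ in~$w^*$ for some $z^*\in\overline{K_0}^{w^*}\subset K\subset Z$, say $z^*=u+t\,x^{**}$ with $u\in X$ and $t=g(z^*)$. If $t\ne0$, then $x^{**}=t^{-1}(z^*-u)$ is the $w^*$-limit of the sequence $\big(t^{-1}(z_{n_k}-u)\big)\subset X$, so $x^{**}\in S_1(X)$, a contradiction; hence I may assume $t=0$ always, so that $z^*=u\in K_0$ and, since $Z^*=X^*+[g]$ while $g(z_{n_k})=0=g(z^*)$, the subsequence $(z_{n_k})$ actually converges weakly in~$Z$ to~$z^*$. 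Thus every sequence in~$K_0$ has a subsequence converging weakly in~$Z$ to a point of~$K_0$, so $K_0$ is relatively weakly compact in~$Z$ by Eberlein--\v{S}mulian.

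To finish, note that $\sigma(Z,X^*)$ is coarser than the weak topology of~$Z$ and coincides with the restriction of~$w^*$ to~$Z$; hence the weak closure of~$K_0$ in~$Z$ equals its $\sigma(Z,X^*)$-closure, which is just $\overline{K_0}^{w^*}$ and therefore contains~$x^{**}$. This set is weakly compact in~$Z$, hence angelic, so $x^{**}$ is the weak limit in~$Z$ of a sequence from $K_0\subset X$; a fortiori it is the $w^*$-limit of that sequence, so $x^{**}\in S_1(X)$, the desired contradiction. The step I expect to be the main obstacle is precisely this passage from nets to sequences: property~$\ptyLK$ only provides a $w^*$-compact set $K$ with $x^{**}$ in a \emph{closure} of~$K_0$, and $w^*$-compact subsets of biduals are in general far from being Fr\'{e}chet-Urysohn; the resolution rests on combining the one-dimensionality of~$Z/X$ with the absence of~$\ell_1$ in~$X$ (which is where the hypothesis ``$X^*$ fully Mackey complete'' is genuinely used, through Corollary~\ref{corNol1}), thereby forcing $K_0$ to be relatively weakly compact and bringing the classical Eberlein--\v{S}mulian angelicity into play.
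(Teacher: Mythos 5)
Your proof is correct and follows essentially the same route as the paper: reduce to the case that $X$ contains no copy of~$\ell_1$ via Corollary~\ref{corNol1}, apply Theorem~\ref{theofullyMackeyLK} to the norming subspace $X\subset X^{**}$ to produce $K\subset X\oplus[x^{**}]$ with $x^{**}\in\overline{K\cap X}^{w^*}$, and then use Rosenthal's $\ell_1$-theorem together with Eberlein--\v{S}mulian to turn the net into a sequence. The only (harmless) difference is organizational: the paper observes directly that $K\cap X$ cannot be weakly compact (its $w^*$-closure contains $x^{**}\notin X$) and picks a single sequence with no weakly convergent subsequence, whereas you run the dichotomy over all sequences and dispose of the residual case by deriving weak compactness of $K\cap X$ and invoking angelicity --- at which point $\overline{K\cap X}^{w^*}=K\cap X\subset X$ already contradicts $x^{**}\notin X$, so the final angelicity step is not even needed.
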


\begin{proof}
By Corollary~\ref{corNol1}, $X^*$ contains no subspace isomorphic to~$\ell_1(\omega_1)$, which 
implies that $X$ contains no subspace isomorphic to~$\ell_1$ (see e.g. \cite[Proposition~4.2]{van}).

Fix $x^{**} \in X^{**}\setminus X$. Since $X$ is a norming (for~$X^*$) and norm-closed subspace of~$X^{**}$, 
Theorem \ref{theofullyMackeyLK} ensures the existence of $K \in  \mathcal{K}(X^{**})$ such that 
$K \subset X \oplus [x^{**}]$ and $x^{**} \in \overline{K \cap X}^{w^\ast}$.
This implies that $K \cap X$ is not weakly compact and therefore it is not weakly sequentially compact (due to the Eberlein-\v{S}mulian theorem). 
Take any sequence $\left( x_n\right)_{n\in \N}$ in $K \cap X$ without weakly convergent subsequences. 
Since $X$ does not contain subspaces isomorphic to~$\ell_1$, we can suppose without loss of generality that 
$\left( x_n\right)_{n\in \N}$ is weakly Cauchy, thanks to Rosenthal's $\ell_1$-theorem (see e.g. \cite[Proposition~4.2]{van}). 
Therefore, $\left( x_n\right)_{n\in \N}$ is $w^*$-convergent to an element of~$X^{**}$
of the form $x+\lambda x^{**}$ with $x\in X$ and $\lambda \neq 0$ (since $K \subset X \oplus [x^{**}]$). 
Thus, $\left( \frac{x_n-x}{\lambda}\right)_{n\in \N}$ is a sequence in $X$ which $w^*$-converges to~$x^{**}$. This shows
that $X$ is $w^*$-sequentially dense in~$X^{**}$.
\end{proof}

Banach spaces which are $w^*$-sequentially dense in their bidual have been widely studied in the literature. 
We next include some related remarks on fully Mackey complete dual spaces which follow from Theorem~\ref{theofullyCorsondual}.

\begin{rem}\label{rem:longJames}
Every $w^*$-sequentially continuous linear functional $f\colon X^{**} \rightarrow \mathbb{R}$ is norm-continuous
when restricted to~$X$, i.e. $f|_X\in X^*$. Therefore, the equality $f(x^{**})=\langle x^{**},f|_X \rangle$ holds
for every $x^{**}\in X^{**}$ which is the $w^*$-limit of a sequence contained in~$X$. 
It follows that if $X$ is $w^*$-sequentially dense in~$X^{**}$, then $(X^{**},w^*)$ has the Mazur property.
This provides new non-trivial examples of Banach spaces which are not fully Mackey complete, 
such as the {\em long James space} $J(\omega_1)$ (see \cite{edg3}). Indeed, $J(\omega_1)$ is the dual of a Banach space~$X$
which is not $w^*$-sequentially dense in~$X^{**}$, since $(J(\omega_1)^*,w^*)$ fails the Mazur property.
\end{rem}

\begin{rem}\label{rem:dual}
If $X$ is $w^*$-sequentially dense in~$X^{**}$, then $X$ contains no subspace isomorphic to~$\ell_1$
(see e.g. \cite[Proposition~3.9]{rod-ver}) and, moreover, in each of the following particular cases $(B_{X^{**}},w^*)$ is Fr\'{e}chet-Urysohn:
\begin{itemize}
\item $X$ is {\em separable}, by the Odell-Rosenthal and Bourgain-Fremlin-Talagrand theorems (see e.g. \cite[Theorem~4.1]{van}). 
\item $X$ is {\em Asplund}, in fact, in this case $X^*$ is weakly Lindelof determined, i.e. $(B_{X^{**}},w^*)$ is Corson
(see \cite[Theorem~III-4]{dev-god} and \cite[Corollary~8]{ori1}).
\end{itemize}
\end{rem}

As a consequence:

\begin{cor}\label{cor:DualOfSeparable}
Suppose $X$ is separable. Then $X^*$ is fully Mackey complete if and only if $(B_{X^{**}},w^*)$ is Fr\'{e}chet-Urysohn
if and only if $X$ contains no subspace isomorphic to~$\ell_1$.
\end{cor}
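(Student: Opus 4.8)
The plan is to prove the corollary as a cycle of implications among the three conditions (1) $X^*$ is fully Mackey complete, (2) $(B_{X^{**}},w^*)$ is Fr\'{e}chet--Urysohn, and (3) $X$ contains no subspace isomorphic to~$\ell_1$, namely
\[
(2)\ \Longrightarrow\ (1)\ \Longrightarrow\ (3)\ \Longrightarrow\ (2).
\]
Only the last implication will use that $X$ is separable; the first two are valid for an arbitrary Banach space~$X$.

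For $(2)\Rightarrow(1)$ I would apply to the Banach space~$X^*$ the result \cite[Theorem~5]{gui-mon-ziz}, quoted in the introduction, that a Banach space whose dual ball is Fr\'{e}chet--Urysohn is fully Mazur. Since the topological bidual of~$X^*$ is~$X^{**}$, hypothesis (2) says exactly that the dual ball of~$X^*$ is Fr\'{e}chet--Urysohn, so $X^*$ is fully Mazur; being fully Mazur, it is fully Mackey complete by statement~(a) of the introduction (equivalently, by the observation made just after the definition of fully Mackey completeness).

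For $(1)\Rightarrow(3)$ I would invoke Theorem~\ref{theofullyCorsondual}: fully Mackey completeness of~$X^*$ forces $X$ to be $w^*$-sequentially dense in~$X^{**}$, and by Remark~\ref{rem:dual} (see also \cite[Proposition~3.9]{rod-ver}) a Banach space that is $w^*$-sequentially dense in its bidual contains no subspace isomorphic to~$\ell_1$.

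Finally, for $(3)\Rightarrow(2)$ --- the step where separability enters --- I would use the Odell--Rosenthal theorem: a separable Banach space with no subspace isomorphic to~$\ell_1$ is $w^*$-sequentially dense in its bidual; the Bourgain--Fremlin--Talagrand theorem then yields that $(B_{X^{**}},w^*)$ is Fr\'{e}chet--Urysohn, which is precisely the separable case already recorded in Remark~\ref{rem:dual}. Since every step is a direct application of results already in place, there is no genuine obstacle; the only point deserving care is the purely formal identification in $(2)\Rightarrow(1)$ of $(B_{X^{**}},w^*)$ with the $w^*$-compact dual ball of~$X^*$, which is what makes \cite[Theorem~5]{gui-mon-ziz} applicable. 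The mathematical substance of the corollary lies in Theorems~\ref{theofullyMackeyLK} and~\ref{theofullyCorsondual}, which furnish the implication $(1)\Rightarrow(3)$.
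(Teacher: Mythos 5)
Your proof is correct and follows essentially the same route the paper intends: the corollary is stated there as a direct consequence of Theorem~\ref{theofullyCorsondual} and Remark~\ref{rem:dual} (Odell--Rosenthal and Bourgain--Fremlin--Talagrand in the separable case), closed up by the fact that a Fr\'{e}chet--Urysohn dual ball of~$X^*$ makes $X^*$ fully Mazur and hence fully Mackey complete. Your explicit cycle $(2)\Rightarrow(1)\Rightarrow(3)\Rightarrow(2)$ just spells this out, and the one point you flag --- identifying $(B_{X^{**}},w^*)$ with the dual ball of~$X^*$ --- is handled correctly.
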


\begin{cor}\label{cor:DualOfAsplund}
Suppose $X$ is Asplund. Then $X^*$ is fully Mackey complete if and only if $X^*$ is weakly Lindelof determined.
\end{cor}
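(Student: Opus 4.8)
The plan is to deduce Corollary~\ref{cor:DualOfAsplund} quickly from results already in hand, in the same spirit as the proof of Corollary~\ref{cor:DualOfSeparable}; the Asplundness of~$X$ will be used only for one of the two implications, while the other is completely general.

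For the ``only if'' direction I would start from the assumption that $X^*$ is fully Mackey complete and apply Theorem~\ref{theofullyCorsondual} to obtain that $X$ is $w^*$-sequentially dense in~$X^{**}$. At this point the hypothesis that $X$ is Asplund enters through Remark~\ref{rem:dual}: an Asplund space which is $w^*$-sequentially dense in its bidual has weakly Lindelof determined dual (here the external input of Deville--Godefroy, \cite[Theorem~III-4]{dev-god}, and Orihuela, \cite[Corollary~8]{ori1}, is invoked, via the fact that $(B_{X^{**}},w^*)$ is then Corson). Hence $X^*$ is WLD, as required.

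For the ``if'' direction I would assume that $X^*$ is WLD and invoke the facts recalled in the introduction: a WLD Banach space has $w^*$-Fr\'echet--Urysohn dual ball and is therefore fully Mazur by \cite[Theorem~5]{gui-mon-ziz}, and every fully Mazur space is fully Mackey complete by statement~(a), i.e.\ \cite[Proposition~10]{gui-mon-ziz}. Applying this with the WLD Banach space~$X^*$ in the role of~$X$ yields that $X^*$ is fully Mackey complete; here the Asplundness of~$X$ plays no part.

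I do not expect a genuine obstacle: once Theorem~\ref{theofullyCorsondual}, Remark~\ref{rem:dual}, and the chain ``WLD $\Rightarrow$ fully Mazur $\Rightarrow$ fully Mackey complete'' are taken as available, both implications are purely formal. The one thing to watch is that, in the forward direction, one has \emph{both} hypotheses of Remark~\ref{rem:dual} in hand before concluding, namely that $X$ is Asplund (given) and that $X$ is $w^*$-sequentially dense in~$X^{**}$ (just provided by Theorem~\ref{theofullyCorsondual}).
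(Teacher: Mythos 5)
Your proposal is correct and follows essentially the same route the paper intends: the forward implication via Theorem~\ref{theofullyCorsondual} combined with Remark~\ref{rem:dual} (using Asplundness), and the reverse implication via the chain WLD $\Rightarrow$ Fr\'echet--Urysohn dual ball $\Rightarrow$ fully Mazur $\Rightarrow$ fully Mackey complete applied to~$X^*$. No gaps.
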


By an {\em almost disjoint family} we mean an infinite family of pairwise almost disjoint infinite subsets of~$\N$, where 
two sets are said to be {\em almost disjoint} if they have finite intersection. 
For any almost disjoint family~$\FF$, we denote by $K_\FF$ the Stone (compact topological) space associated to the Boolean algebra generated by~$\FF$
and the finite subsets of~$\mathbb{N}$. Notice that there is a natural decomposition 
$$
	K_\FF=\mathbb{N}\cup \{ u_N: N \in \FF \}\cup \{\infty\},
$$ 
where each point of~$\mathbb{N}$ is isolated, the basic open neighborhoods of each $u_N$ are of the form $\{u_N\}\cup (N\setminus F)$ 
where $F \subset \mathbb{N}$ is finite, and the basic open neighborhoods of~$\infty$ are of the form $K_\FF \setminus \bigcup_{N\in \FF_0} (\{u_{N}\}\cup N)$
where $\FF_0 \subset \FF$ is finite. Then $K_\FF$ is scattered (of height~$3$) and so
$$
	C(K_\FF)^*= \ell_1(K_\FF)=
	\ell_1(\mathbb{N})\oplus\ell_1(\FF) \oplus [\delta_{\infty}].
$$ 
Observe that $\ell_1(\mathbb{N})$ is a norming (since $\mathbb{N}$ is dense in~$K_\FF$) and norm-closed subspace of~$C(K_\FF)^*$.
Under the Continuum Hypothesis, the construction in \cite[Section~4]{avi-mar-rod} provides a maximal (with respect to inclusion) almost disjoint family~$\FF$ 
for which no sequence in the convex hull of~$\{\delta_{n}: n\in \N\}$ is $w^*$-convergent to~$\delta_\infty$. We will improve such construction as follows:

\begin{theo}\label{theo:EprimaNoFullyCompleto}
Under the Continuum Hypothesis, there exists a maximal almost disjoint family $\FF$ such that: 
\begin{enumerate}
\item[(i)] No sequence in $\ell_1(\N)$ is $w^*$-convergent to~$\delta_\infty$.
\item[(ii)] $C(K_{\FF})$ is not fully Mackey complete.
\end{enumerate}
\end{theo}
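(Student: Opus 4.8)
The plan is to build the maximal almost disjoint family $\FF$ by a transfinite recursion of length $\omega_1$, interweaving two kinds of requirements. On the one hand, I must ensure maximality and property~(i): at stage $\alpha$ I will kill a candidate sequence $(\mu_k)_k$ in $\ell_1(\N)$ from becoming $w^*$-convergent to~$\delta_\infty$, exactly as in the construction of \cite[Section~4]{avi-mar-rod}. Enumerating all such sequences (possible under CH) and handling one per stage guarantees (i) and, with the usual bookkeeping over all infinite subsets of~$\N$, maximality of~$\FF$. On the other hand, to get~(ii) I will, along the way, also single out a sequence $(N_j)_{j\in\N}$ of members of~$\FF$ together with a carefully chosen set $D\subset\N$ witnessing the failure of property~$\ptyLK$ via Theorem~\ref{TheoGeneralizationNonfullyMackey}.

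The point of departure for~(ii) is to apply Theorem~\ref{TheoGeneralizationNonfullyMackey} with $X=C(K_\FF)$, with the countably compact space $T$ and the distinguished point~$\infty$ and the map $f\colon T\to X$ chosen so that the norming and norm-closed subspace
\[
	Y=\{x^*\in C(K_\FF)^*:\ x^*\circ f \text{ is continuous}\}
\]
contains $\ell_1(\N)$ — recall $\ell_1(\N)$ is already norming and norm-closed in $C(K_\FF)^*=\ell_1(\N)\oplus\ell_1(\FF)\oplus[\delta_\infty]$. A natural choice is to take $T$ to be (a subspace of) $\beta\N$, or more simply $T=K_\FF$ itself with $\infty$ its distinguished point, and $f(t)=$ (a fixed continuous function, or the obvious ``indicator-type'' map) so that continuity of $x^*\circ f$ forces $x^*$ to behave like an element supported on~$\N$. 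Condition~(i) above, ``no sequence in $\ell_1(\N)$ is $w^*$-convergent to $\delta_\infty$'', is precisely what one needs so that the functional $f$ of Theorem~\ref{theofullyMackeyLK}/\ref{TheoGeneralizationNonfullyMackey} associated with $Y\oplus[\delta_\infty]$ is $w^*$-sequentially continuous on the relevant sets; and $x^*_\infty:=\delta_\infty$, $D:=\N$ (or an appropriate infinite $D\subset\N$ built into the recursion) will satisfy hypothesis~(iii), namely that $D$ meets every $\mathcal{G}_\delta$-set containing~$\infty$, because basic neighbourhoods of~$\infty$ in $K_\FF$ are cofinite-like and a countable intersection of them still has infinite intersection with~$\N$ once $\FF$ is maximal.

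The delicate point — and the main obstacle — is compatibility: the same family $\FF$ must simultaneously satisfy~(i) (which is about sequences in $\ell_1(\N)$ \emph{not} converging to $\delta_\infty$) and make hypothesis~(iii) of Theorem~\ref{TheoGeneralizationNonfullyMackey} work with $D\subset\N$, i.e. $D$ must meet $K_\FF\setminus\bigcup_{N\in\FF_0}(\{u_N\}\cup N)$ for every finite (indeed every countable, by maximality plus a diagonal argument) $\FF_0\subset\FF$. If one takes $D=\N$ this is automatic from maximality of~$\FF$: a countable union $\bigcup_{N\in\FF_0}N$ of members of an AD family cannot cover a cofinite set, and in fact a maximal AD family has the property that $\N\setminus\bigcup\FF_0$ is infinite for every countable $\FF_0$. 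So in fact no extra work is needed inside the recursion for~(iii) beyond maximality; the real content is checking that the hypotheses of Theorem~\ref{TheoGeneralizationNonfullyMackey} are literally met, i.e. that $Y$ is norm-closed (clear, since $f$ is bounded) and norming (since $\ell_1(\N)\subset Y$), and that $x^*_\infty\notin Y$ with the $\mathcal{G}_\delta$-intersection property. Once all of this is in place, Theorem~\ref{TheoGeneralizationNonfullyMackey} yields that $C(K_\FF)$ is not fully Mackey complete, which is~(ii), while the $\omega_1$-recursion with the bookkeeping of \cite{avi-mar-rod} delivers~(i) and maximality. The only genuinely new labour over \cite{avi-mar-rod} is verifying that their construction survives the extra demand placed by~(iii) — and, as just noted, that demand is subsumed by maximality, so the modification is essentially cost-free.
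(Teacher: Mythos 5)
Your route to part~(ii) has a genuine gap. You propose to apply Theorem~\ref{TheoGeneralizationNonfullyMackey} with $T=K_\FF$, $x_\infty^*=\delta_\infty$ and a set $D\subset\N$, claiming that hypothesis~(iii) --- that $D$ meets every $\mathcal{G}_\delta$-set containing $\infty$ --- comes for free from maximality of~$\FF$. This is false: $\N$ is an open subset of $K_\FF$, so $K_\FF\setminus\N=\bigcap_{n\in\N}(K_\FF\setminus\{n\})$ is a $\mathcal{G}_\delta$-set containing $\infty$ and disjoint from every $D\subset\N$. (Your auxiliary claim that $\N\setminus\bigcup\FF_0$ is infinite for every countable $\FF_0\subset\FF$ is also wrong --- a maximal almost disjoint family may contain two sets whose union is $\N$ --- but the $\mathcal{G}_\delta$ obstruction above is already fatal.) Any $D$ satisfying (iii) would have to contain $u_N$ for uncountably many $N\in\FF$, which is incompatible with your intended choices of $f$ and of $Y\supset\ell_1(\N)$. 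The paper does not use Theorem~\ref{TheoGeneralizationNonfullyMackey} here at all: instead, (i) together with Theorem~\ref{profullyMazurnorminglysequential} shows that $C(K_\FF)$ is not fully Mazur (take $Y=\ell_1(\N)$, which is norming and norm-closed, and note that $\delta_\infty\notin S_1(Y)$ by~(i)), and then --- this is the step your plan is missing --- one uses that $K_\FF$ is scattered of countable height, so $(B_{C(K_\FF)^*},w^*)$ is sequential and $C(K_\FF)$ has property~$\ptyEE$; Corollary~\ref{cor:EE} then upgrades ``not fully Mazur'' to ``not fully Mackey complete''. Without some such bridge, failure of the Mazur property alone does not rule out Mackey completeness: that implication is exactly the open Problem~\ref{prob:equivalence}.

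There is a second, smaller gap in part~(i): the construction of \cite[Section~4]{avi-mar-rod} only prevents sequences in the \emph{convex hull} of $\{\delta_n:n\in\N\}$ from $w^*$-converging to $\delta_\infty$, whereas (i) demands this for arbitrary sequences in $\ell_1(\N)$, whose coefficient matrices are signed and merely absolutely summable in each row. Handling these requires the strengthened diagonal argument of Lemma~\ref{lem:Gonzalo}, where the finite blocks $F_r$ must be chosen away from both the previously used blocks and the tails carrying small $\ell_1$-mass; saying ``exactly as in \cite{avi-mar-rod}'' does not cover this.
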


Part~(i) will be proved with the help of Lemmas~\ref{lem:Gonzalo} and~\ref{CoroMatrices} below. 
The first one is a refinement of \cite[Lemma~4.2]{avi-mar-rod}:

\begin{lem}\label{lem:Gonzalo}
	Let $\FF=\{N_r: r\in \N\}$ be a countable almost disjoint family and let $\(\lambda_{i,j}\)_{i,j \in \N}\in \mathbb{R}^{\mathbb{N}\times \mathbb{N}}$ 
	be a matrix satisfying the following properties:
	\begin{enumerate}
	\item[(i)] $\lim_{i\to \infty} \lambda_{i,j} =0$ for every $j\in \N$;
	\item[(ii)] $\sum_{j\in \N}|\lambda_{i,j}|<\infty$ for every $i\in \N$; 
	\item[(iii)] $\lim_{i\to \infty} \sum_{j \in \N} \lambda_{i,j}=1$. 
	\end{enumerate} 
	If 
	\begin{equation}\label{eqn:sums}
		\lim_{i\to \infty} \sum_{j \in N_1 \cup \ldots \cup N_r} \lambda_{i,j}=0
		\quad
		\mbox{for every }
		r \in \N,
	\end{equation}
	then there exists an infinite set $N' \subset \N$ such that $\FF \cup \{ N' \}$ is almost disjoint and 
	$$
		\limsup_{i\to\infty} \sum_{j \in N'} \lambda_{i,j} \geq \frac{1}{2}.
	$$
\end{lem}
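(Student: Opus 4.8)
The plan is a gliding-hump construction. Write $A_r:=N_1\cup\dots\cup N_r$ for $r\in\N$. The first observation is that (iii) and the hypothesis~\eqref{eqn:sums} give, for each fixed $r$, that $\sum_{j\in\N\setminus A_r}\lambda_{i,j}=\sum_{j\in\N}\lambda_{i,j}-\sum_{j\in A_r}\lambda_{i,j}\to 1$ as $i\to\infty$ (all these sums being well defined by~(ii)). The key auxiliary step I would isolate is: \emph{for every $r\in\N$, every $\varepsilon\in(0,1)$, every $N\in\N$ and every $i_0\in\N$ there exist $i>i_0$ and a finite set $F\subset(\N\setminus A_r)\cap(N,\infty)$ with $\sum_{j\in F}\lambda_{i,j}>1-\varepsilon$.} To prove it, first use the limit above together with~(i) to choose $i>i_0$ with $\bigl|\sum_{j\in\N\setminus A_r}\lambda_{i,j}-1\bigr|<\varepsilon/3$ and $\sum_{j\le N}|\lambda_{i,j}|<\varepsilon/3$ (only finitely many columns are involved), then use~(ii) to pick $M>N$ with $\sum_{j>M}|\lambda_{i,j}|<\varepsilon/3$, and set $F:=(\N\setminus A_r)\cap(N,M]$; splitting $\sum_{j\in\N\setminus A_r}\lambda_{i,j}$ into the parts with $j\le N$, $j\in F$ and $j>M$ yields $\sum_{j\in F}\lambda_{i,j}>1-\varepsilon$, and in particular $F\neq\emptyset$.

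Next I would build recursively a strictly increasing sequence $(i_k)_{k\in\N}$ and nonempty finite sets $(F_k)_{k\in\N}$, and put $N':=\bigcup_{k\in\N}F_k$. At step $k$: (1) using~(ii), fix $m_k\in\N$ with $\sum_{j>m_k}|\lambda_{i_p,j}|<2^{-k}$ for every $p<k$; (2) apply the auxiliary step with $r:=k$, $\varepsilon:=2^{-k}$, $N:=\max(m_k,\max F_{k-1})$ (convention $\max F_0:=0$), and with $i_0$ chosen large enough that the produced index $i_k$ also satisfies $i_k>i_{k-1}$ and $\sum_{j\in F_1\cup\dots\cup F_{k-1}}|\lambda_{i_k,j}|<2^{-k}$ (both of these are conditions of the form ``for all $i$ large'', so they can be folded into the choice of $i_k$ by~(i)). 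This yields $i_k$ and $F_k\subset(\N\setminus A_k)\cap(\max(m_k,\max F_{k-1}),\infty)$ with $\sum_{j\in F_k}\lambda_{i_k,j}>1-2^{-k}$. Then $\min F_k>\max F_{k-1}$, so the $F_k$ are pairwise disjoint with $\max F_k\to\infty$; and $F_k\cap A_k=\emptyset$, so for every $r$ we have $N'\cap N_r\subset F_1\cup\dots\cup F_{r-1}$, a finite set. Hence $N'$ is infinite and $\FF\cup\{N'\}$ is almost disjoint.

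Finally, fix $k$ and estimate $\sum_{j\in N'}\lambda_{i_k,j}$. Since the $k$-th row is absolutely summable and the $F_\ell$ partition $N'$, we may write $\sum_{j\in N'}\lambda_{i_k,j}=\sum_{\ell\ge1}\bigl(\sum_{j\in F_\ell}\lambda_{i_k,j}\bigr)$. The term $\ell=k$ is $>1-2^{-k}$; the terms with $\ell<k$ contribute, in absolute value, at most $\sum_{j\in F_1\cup\dots\cup F_{k-1}}|\lambda_{i_k,j}|<2^{-k}$; and for $\ell>k$ we have $F_\ell\subset(m_\ell,\infty)$ and $k<\ell$, so $\sum_{j\in F_\ell}|\lambda_{i_k,j}|\le\sum_{j>m_\ell}|\lambda_{i_k,j}|<2^{-\ell}$, whence $\sum_{\ell>k}\bigl|\sum_{j\in F_\ell}\lambda_{i_k,j}\bigr|<\sum_{\ell>k}2^{-\ell}=2^{-k}$. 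Therefore $\sum_{j\in N'}\lambda_{i_k,j}>1-3\cdot 2^{-k}$, and since $i_k\to\infty$ this gives $\limsup_{i\to\infty}\sum_{j\in N'}\lambda_{i,j}\ge\lim_{k\to\infty}(1-3\cdot 2^{-k})=1\ge 1/2$.

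The main obstacle is the signed nature of the $\lambda_{i,j}$: one cannot simply collect the mass lying off $A_r$, since the limit $\sum_{j\in\N\setminus A_r}\lambda_{i,j}\to1$ could hide large cancellation. The content of the auxiliary step is precisely to convert this signed statement into an honest finite set with large \emph{positive} partial sum, and it is there that the three hypotheses (i)--(iii) have to be played against one another. Once that is available, the remaining bookkeeping --- arranging that the cross terms between distinct blocks $F_\ell$ are summably small --- is routine, but it does use in an essential way that $\FF$ is countable, so that the constraint ``$F_k$ avoids $N_1,\dots,N_k$'' eventually rules out every member of $\FF$.
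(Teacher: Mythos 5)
Your proof is correct and follows essentially the same gliding-hump strategy as the paper: pick rows where the total sum is near $1$ while the contribution of $N_1\cup\dots\cup N_r$ and of previously used columns is small, truncate to a finite block using absolute summability of the row, and let $N'$ be the union of the blocks. The differences are only bookkeeping --- you force the blocks to occupy increasing ranges of indices and control the \emph{future} blocks via the pre-chosen tail cutoffs $m_k$, which incidentally gives the stronger conclusion $\limsup_{i\to\infty}\sum_{j\in N'}\lambda_{i,j}\geq 1$, whereas the paper absorbs all but a fixed fraction of the absolute mass outside $N_1\cup\dots\cup N_r$ into the block $F_r$ itself and lands at the stated bound $\frac12$.
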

\begin{proof}
	Write $\tilde{N}_r:=N_1\cup\ldots\cup N_r$ for all $r\in \N$. 
	We will construct by induction a sequence $\(F_r\)_{r\in \N}$ of finite subsets of~$\N$ and a strictly increasing sequence $\(n_r\)_{r\in \N}$ of natural numbers
	as follows. Take any finite set $F_1 \subset \N$ and any $n_1\in \N$. Given $r\in \N$, $r\geq 2$, 
	suppose the finite sets $F_1,\dots,F_{r-1}\subset \N$  and $n_1<\dots<n_{r-1}$ in~$\N$ 
	have already been chosen. By~\eqref{eqn:sums} and (iii), we can find $n_r \in \N$ with $n_r>n_{r-1}$ in such a way that
	\begin{enumerate}
		\item[(a)] $\sum_{j \in \tilde{N}_r} \lambda_{n_r,j} \leq \frac{1}{16}$;
		\item[(b)] $\sum_{j \in\N} \lambda_{n_r,j} \geq \frac{3}{4}$.
	\end{enumerate} 
	Since $F_1\cup\ldots\cup F_{r-1}$ is finite, (i) allows us to assume further that
	\begin{enumerate}
		\item[(c)] $\sum_{j\in F_1\cup\ldots \cup F_{r-1}}|\lambda_{n_r,j}| \leq \frac{1}{16}$. 
	\end{enumerate}
	By~(ii), there is a finite set $F_r \subset \N \setminus \tilde{N}_r$ satisfying 
	\begin{equation}\label{eqn:absoluto}
		\sum_{j \in \N \setminus (\tilde{N}_r \cup F_r)} |\lambda_{n_r,j}|\leq \frac{1}{16}. 
	\end{equation}
	Notice that (a), (b) and \eqref{eqn:absoluto} yield
	\begin{equation}\label{eqn:sum}
		\sum_{j \in F_r} \lambda_{n_r,j} = 
		\sum_{j \in\N} \lambda_{n_r,j} - \sum_{j \in \tilde{N}_r} \lambda_{n_r,j} - 
		\sum_{j \in \N \setminus (\tilde{N}_r \cup F_r)} \lambda_{n_r,j}  \geq  \frac{3}{4}-\frac{1}{16}-\frac{1}{16}=\frac{5}{8}.
	\end{equation}
	This finishes the inductive construction.

	Let us check that $N' := \bigcup_{r \in \N} F_r$ satisfies the required properties. On one hand, for each $r\in \N$ with $r\geq 2$ we have
	$$
		N'\setminus F_r \subset \big(F_1\cup\ldots\cup F_{r-1}\big) \cup \big(\N\setminus (\tilde{N}_r\cup F_r)\big), 
	$$
	hence 
	$$
		\sum_{j\in N'\setminus F_r}\lambda_{n_r,j} +
		\sum_{j\in F_1\cup\ldots\cup F_{r-1}}|\lambda_{n_r,j}|+ \sum_{j\in \N\setminus (\tilde{N}_r\cup F_r)}|\lambda_{n_r,j}| \geq  0.
	$$
	This inequality, (c), \eqref{eqn:absoluto} and~\eqref{eqn:sum} yield
	\begin{multline*}
		\sum_{j\in N'} \lambda_{n_r,j}= \sum_{j\in F_r}\lambda_{n_r,j}+\sum_{j\in N'\setminus F_r} \lambda_{n_r,j} \\ \geq
		\sum_{j\in F_r}\lambda_{n_r,j} -\sum_{j\in F_1\cup\ldots\cup F_{r-1}}|\lambda_{n_r,j}|- \sum_{j\in \N\setminus (\tilde{N}_r\cup F_r)}|\lambda_{n_r,j}|
		\geq \frac{5}{8}-\frac{1}{16}-\frac{1}{16}=\frac{1}{2}.
	\end{multline*}
	As $r\in \mathbb{N}$ is arbitrary, it follows that
	$$
		\limsup_{i\to \infty} \sum_{j \in N'} \lambda_{i,j} \geq \frac{1}{2}.
	$$ 
	On the other hand, (i) ensures that $\lim_{i\to\infty} \sum_{j \in F} \lambda_{i,j} =0$ whenever $F \subset \N$ is finite, therefore $N'$ is infinite. By
	construction, $N'\cap N_1 \subseteq F_1$ and for each $r\in \N$ with $r\geq 2$ the intersection 
	$N' \cap N_r$ is contained in the finite set $F_1 \cup F_2 \cup \ldots \cup F_{r-1}$. 
	This shows that $\FF \cup \{N'\}$ is an almost disjoint family.
\end{proof}

\begin{lem}
	\label{CoroMatrices}
	Let $\{\(\lambda_{i,j}^\alpha\)_{i,j\in \N}: \alpha < \omega_1\}$ be a family of matrices of~$\mathbb{R}^{\mathbb{N}\times \mathbb{N}}$ 
	satisfying properties (i), (ii) and (iii) of Lemma~\ref{lem:Gonzalo}. 	Then there exists an almost disjoint family $\FF$ such that for every $\alpha < \omega_1$ 
	there is $N'_\alpha \in \FF$ for which the sequence $\(\sum_{j \in N'_\alpha} \lambda_{i,j}^\alpha\)_{i\in \N}$ does not converge to~$0$.
\end{lem}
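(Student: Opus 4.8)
The plan is to construct $\FF$ by transfinite recursion of length $\omega_1$. I will build an increasing chain $\(\mathcal{G}_\alpha\)_{\alpha\leq\omega_1}$ of countable almost disjoint families, where each $\mathcal{G}_{\alpha+1}$ is obtained from $\mathcal{G}_\alpha$ by adding at most one new set, together with a choice of a set $N'_\alpha\in\mathcal{G}_{\alpha+1}$ at each successor step such that $\(\sum_{j\in N'_\alpha}\lambda_{i,j}^\alpha\)_{i\in\N}$ does not converge to~$0$. The recursion starts from a fixed partition $\mathcal{G}_0$ of~$\N$ into infinitely many infinite pieces, and at limit ordinals one takes unions. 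Finally I set $\FF:=\mathcal{G}_{\omega_1}=\bigcup_{\alpha<\omega_1}\mathcal{G}_\alpha$. A straightforward induction shows that each $\mathcal{G}_\alpha$ with $\alpha<\omega_1$ is countable and (since $\mathcal{G}_0\subseteq\mathcal{G}_\alpha$) infinite, so at every stage it may be enumerated as $\{N_r:r\in\N\}$ and fed to Lemma~\ref{lem:Gonzalo}; moreover $\FF$ is an almost disjoint family, because any two of its members already belong to a common $\mathcal{G}_\alpha$, and it contains $N'_\alpha$ for each $\alpha<\omega_1$, which is exactly what the statement asks for.

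The content is concentrated in the successor step, where the key idea is a dichotomy that sidesteps hypothesis~\eqref{eqn:sums} of Lemma~\ref{lem:Gonzalo}. Fix $\alpha<\omega_1$, enumerate $\mathcal{G}_\alpha=\{N_r:r\in\N\}$, and write $\tilde{N}_r:=N_1\cup\ldots\cup N_r$. If $\lim_{i\to\infty}\sum_{j\in\tilde{N}_r}\lambda_{i,j}^\alpha=0$ for every $r\in\N$ (Case~1), then the matrix $\(\lambda_{i,j}^\alpha\)$ and the family $\mathcal{G}_\alpha$ satisfy all hypotheses of Lemma~\ref{lem:Gonzalo}, so there is an infinite $N'\subset\N$ with $\mathcal{G}_\alpha\cup\{N'\}$ almost disjoint and $\limsup_{i\to\infty}\sum_{j\in N'}\lambda_{i,j}^\alpha\geq\frac12$; I then put $N'_\alpha:=N'$ and $\mathcal{G}_{\alpha+1}:=\mathcal{G}_\alpha\cup\{N'\}$, and the corresponding sequence clearly does not converge to~$0$. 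Otherwise (Case~2) there is $r\in\N$ with $\sum_{j\in\tilde{N}_r}\lambda_{i,j}^\alpha\not\to 0$, and I claim that then $\sum_{j\in N_s}\lambda_{i,j}^\alpha\not\to 0$ already for some $s\leq r$; in that case I set $N'_\alpha:=N_s\in\mathcal{G}_\alpha$ and $\mathcal{G}_{\alpha+1}:=\mathcal{G}_\alpha$, adding nothing new.

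The claim used in Case~2 is a routine disjointification: put $M_1:=N_1$ and $M_s:=N_s\setminus(N_1\cup\ldots\cup N_{s-1})$ for $2\leq s\leq r$. Since $\mathcal{G}_\alpha$ is almost disjoint, each $N_s\setminus M_s$ is finite, and property~(i) of the matrix forces $\sum_{j\in F}\lambda_{i,j}^\alpha\to 0$ for every finite $F\subset\N$; hence $\sum_{j\in N_s}\lambda_{i,j}^\alpha$ and $\sum_{j\in M_s}\lambda_{i,j}^\alpha$ have the same limiting behaviour for each $s$ (both being absolutely convergent by~(ii)). As the $M_s$ are pairwise disjoint with union $\tilde{N}_r$, we get $\sum_{j\in\tilde{N}_r}\lambda_{i,j}^\alpha=\sum_{s=1}^r\sum_{j\in M_s}\lambda_{i,j}^\alpha$, a \emph{finite} sum, so if every $\sum_{j\in N_s}\lambda_{i,j}^\alpha$ converged to~$0$ then so would $\sum_{j\in\tilde{N}_r}\lambda_{i,j}^\alpha$, contradicting the Case~2 hypothesis; this proves the claim. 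The only genuinely delicate point is organizing the recursion so that Lemma~\ref{lem:Gonzalo} is always invoked on a legitimately infinite countable almost disjoint family whose matrix meets all the hypotheses --- in particular~\eqref{eqn:sums} --- and the Case~1/Case~2 split is precisely the mechanism that guarantees this; everything else --- countability and infinitude of the $\mathcal{G}_\alpha$ and almost disjointness of $\FF$ --- is routine bookkeeping, and no set-theoretic hypotheses beyond ZFC are needed.
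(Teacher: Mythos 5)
Your proposal is correct and follows essentially the same transfinite recursion as the paper: at each step either hypothesis~\eqref{eqn:sums} holds and Lemma~\ref{lem:Gonzalo} supplies a new set, or a witness already present in the family is taken, with limits handled by unions. The only cosmetic difference is that you phrase the dichotomy in terms of the finite unions $N_1\cup\ldots\cup N_r$ and deduce a single failing $N_s$ by disjointification, whereas the paper tests individual members and derives \eqref{eqn:sums} from the same finite-intersection observation; these are the two directions of the same equivalence.
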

\begin{proof}
Let $\mathcal{G}=\{N_r:r\in \N\}$ be any countable almost disjoint family. If there is $r\in \N$
for which $\(\sum_{j \in N_r} \lambda_{i,j}^0\)_{i\in \N}$ does not converge to~$0$, then we set $\FF_0:=\mathcal{G}$
and $N_0':=N_r$.
Otherwise, $\lim_{i\to \infty} \sum_{j \in N_r} \lambda_{i,j}^0 =0$ for all $r\in \N$. Observe that for each $r\in \N$ with $r\geq 2$ we have
$$
	\sum_{j \in N_1 \cup \ldots \cup N_r} \lambda^0_{i,j} =
	\sum_{j \in N_1 \cup \ldots \cup N_{r-1}} \lambda^0_{i,j}+\sum_{j \in N_r} \lambda^0_{i,j}
	-\sum_{j \in N_r \cap  (N_1 \cup \ldots \cup N_{r-1})} \lambda^0_{i,j}
$$
for all $i\in \N$ and
$$
	\lim_{i\to \infty} \sum_{j \in N_r\cap (N_1 \cup \ldots \cup N_{r-1})} \lambda^0_{i,j}=0,
$$
by property~(i) and the finiteness of $N_r\cap (N_1 \cup \ldots \cup N_{r-1})$. This clearly implies (by induction on~$r$) that
$$		
		\lim_{i\to \infty} \sum_{j \in N_1 \cup \ldots \cup N_r} \lambda^0_{i,j}=0
		\quad
		\mbox{for every }
		r \in \N,
$$
so Lemma~\ref{lem:Gonzalo} can be applied to find an infinite set $N'_0 \subset \N$ for which $\mathcal{F}_0:=\mathcal{G}\cup\{N'_0\}$
is an almost disjoint family and $\(\sum_{j \in N'_0} \lambda_{i,j}^0\)_{i\in\N}$ does not converge to~$0$. 

We now construct, by transfinite induction on~$\alpha<\omega_1$, an increasing chain $\(\FF_\alpha\)_{\alpha<\omega_1}$ of 
countable almost disjoint families and sets $N'_\alpha\in \FF_\alpha$ for which 
the sequence $\(\sum_{j \in N'_\alpha} \lambda_{i,j}^\alpha\)_{i\in \N}$ does not converge to~$0$. 
Suppose that $0<\alpha < \omega_1$ and that $\FF_\beta$ and $N'_\beta$ are already constructed for every $\beta<\alpha$. 
If 
\begin{equation}\label{eqn:4J}
	\lim_{i\to \infty} \sum_{j \in N} \lambda_{i,j}^\alpha=0
	\quad\mbox{for every }N \in \bigcup_{\beta<\alpha}\FF_\beta,
\end{equation}
then we can apply the argument above to $\bigcup_{\beta<\alpha}\FF_\beta$ (which is a countable almost disjoint family) and the 
matrix~$\(\lambda_{i,j}^\alpha\)_{i,j\in \N}$
in order to get an infinite set $N'_\alpha \subset \N$ such that $\FF_{\alpha}:=( \bigcup_{\beta<\alpha}\FF_\beta ) \cup \{N'_\alpha\}$ is
almost disjoint and $\(\sum_{j \in N'_\alpha} \lambda^\alpha_{i,j}\)_{i\in\N}$ does not converge to~$0$.  
Otherwise, if \eqref{eqn:4J} fails, then we take $\FF_{\alpha}:=\bigcup_{\beta<\alpha}\FF_\beta$
and any $N'_\alpha\in\FF_\alpha$ witnessing the failure of~\eqref{eqn:4J}.

Clearly, $\FF := \bigcup_{\beta<\w_1}\FF_\beta$ is an almost disjoint family satisfying the required property.
\end{proof}

\begin{proof}[Proof of Theorem~\ref{theo:EprimaNoFullyCompleto}]
	(i) The set of {\em all} matrices of~$\mathbb{R}^{\mathbb{N}\times \mathbb{N}}$ 
	satisfying properties (i), (ii) and (iii) of Lemma~\ref{lem:Gonzalo} has cardinality~$\mathfrak{c}$ and so, under the Continuum Hypothesis,
	it can be enumerated as $\{\(\lambda_{i,j}^\alpha\)_{i,j\in \N}: \alpha < \w_1\}$.
	Let $\FF$ be the almost disjoint family given by Lemma~\ref{CoroMatrices}. To check that $\FF$ is maximal, take any 
	infinite set $N=\{n_k:k\in \N\} \subset \N$ and define a matrix $\(\lambda_{i,j}\)_{i,j \in \N}\in \mathbb{R}^{\N\times \N}$ 
	by declaring $\lambda_{i,j}:=1$ if $n_i=j$ and $\lambda_{i,j}:=0$ otherwise. Obviously, it satisfies properties
	(i), (ii) and (iii) of Lemma~\ref{lem:Gonzalo}, hence there is $N' \in \FF$ such that $\(\sum_{j \in N'} \lambda_{i,j}\)_{i\in \N}$ does not converge to~$0$, 
	which clearly implies that $N \cap N' $ is infinite. This shows that $\FF$ is maximal.
	
	Suppose $\(x_i^*\)_{i\in\N}$ is a sequence in~$\ell_1(\N)$ which $w^*$-converges to~$\delta_\infty$ in~$C(K_{\FF})^*$. 
	For each $i\in\N$ we write $x_i^*=\sum_{j\in \N} \lambda_{i,j} \delta_j$, where $\lambda_{i,j}\in \mathbb{R}$ and
	$\sum_{j\in \N}|\lambda_{i,j}|<\infty$. Then 
	$$
		\lim_{i\to\infty} \lambda_{i,j} = \lim_{i\to\infty}x_i^*(\chi_{\{j\}})=\delta_\infty(\chi_{\{j\}})=0 \quad
		\mbox{for every }j\in \N
	$$
	and 
	$$
		\lim_{i\to\infty} \sum_{j \in \N } \lambda_{i,j} = 
		\lim_{i\to\infty} x_i^*(\chi_{K_\FF})=\delta_\infty(\chi_{K_\FF})=1. 
	$$
	Therefore, $\(\lambda_{i,j}\)_{i,j\in \N}=\(\lambda^\alpha_{i,j}\)_{i,j\in \N}$ for some $\alpha< \omega_1$. 
	But then there exists $N'_\alpha \in \FF$ such that $\(\sum_{j \in N'_\alpha} \lambda_{i,j}\)_{i\in \N}$ does not converge to~$0$,
	which is a contradiction since 
	$$ 
		\sum_{j \in N'_\alpha} \lambda_{i,j} =x_i^*(\chi_{\{u_{N'_\alpha}\}\cup N'_\alpha}) \quad\mbox{for all }i\in \N
	$$
	and $\delta_\infty(\chi_{\{u_{N'_\alpha}\}\cup N'_\alpha})=0$.

(ii) The space $C(K_\FF)$ is not fully Mazur by~(i) and Theorem~\ref{profullyMazurnorminglysequential}. On the other hand, 
since $K_\FF$ is scattered of countable height, $(B_{C(K_\FF)^*},w^*)$ is sequential, meaning that every $w^*$-sequentially closed
subset of $B_{C(K_\FF)^*}$ is $w^*$-closed (see \cite[Theorem~3.2]{gon3}). Hence
$C(K_\FF)$ has property~$\ptyEE$ and so $C(K_\FF)$ is not fully Mackey complete 
(apply Corollary~\ref{cor:EE}). 
The proof is finished.
\end{proof}

We finish the paper with some open questions:

\begin{prob}\label{prob:equivalence}
		Are fully Mazur and fully Mackey completeness equivalent? 
\end{prob}

As we pointed out in Corollary~\ref{cor:EE}, Problem~\ref{prob:equivalence} has an affirmative answer for Banach spaces with property~$\ptyEE$.

\begin{prob}
		Does fully Mackey completeness imply property~$\ptyEE$ or the weaker Corson's property~$\ptyC$?
\end{prob}

\begin{prob}\label{prob:sequentiallydense}
		Is $X^*$ fully Mackey complete whenever $X$ is $w^*$-sequentially dense in~$X^{**}$?
\end{prob}

Remark~\ref{rem:dual} makes clear that a negative answer to Problem~\ref{prob:sequentiallydense}
would be based on a non-separable and non-Asplund space~$X$ without subspaces isomorphic to~$\ell_1$.

\subsection*{Acknowledgements}
The authors wish to thank A. Avil\'{e}s for valuable discussions on the topic of this paper.
A.J. Guirao was supported by projects MTM2017-83262-C2-1-P (AEI/FEDER, UE) 
and 19368/PI/14 (Fundaci\'on S\'eneca). G. Mart\'{\i}nez-Cervantes and J. Rodr\'{i}guez
were supported by projects MTM2014-54182-P and
MTM2017-86182-P (AEI/FEDER, UE) and 19275/PI/14 (Fundaci\'on S\'eneca).

\bibliographystyle{amsplain}

\end{document}